\documentclass{article} 
\usepackage[utf8]{inputenc}
\usepackage[a4paper]{geometry}
\usepackage[english]{babel}
\usepackage{color}

\usepackage{fullpage}

\usepackage{amssymb,amsmath,mathtools,amsthm,dsfont}

\usepackage[all]{xy}

\usepackage[usenames, dvipsnames]{xcolor}
\usepackage{tikz}
\usetikzlibrary{patterns}
\usepackage[justification=centering]{caption}
\usepackage{pgfplots}
\pgfplotsset{compat=1.9}
\usepackage{mathrsfs}
\usetikzlibrary{arrows}

\newtheorem{theorem}{Theorem}
\newtheorem{prop}[theorem]{Proposition}
\newtheorem{corollary}[theorem]{Corollary}
\newtheorem{lemma}[theorem]{Lemma}
\newtheorem{claim}[theorem]{Claim}

\newcommand{\E}{\mathbb{E}}

\newcommand{\N}{\mathbb{N}}

\renewcommand{\P}{\mathbb{P}}

\newcommand{\R}{\mathbb{R}}

\newcommand{\Z}{\mathbb{Z}}

\newcommand{\cC}{\mathcal{C}}

\newcommand{\cE}{\mathcal{E}}
\newcommand{\cF}{\mathcal{F}}

\newcommand{\cT}{\mathcal{T}}

\newcommand{\cX}{\mathcal{X}}

\renewcommand{\1}{\mathds{1}}

\renewcommand{\epsilon}{\varepsilon}
\renewcommand{\phi}{\varphi}

\newcommand{\diam}{\text{diam}}

\begin{document}

\selectlanguage{english}

\title{Disjoint finite geodesics in first-passage percolation}

\author{Olivier Durieu\footnote{olivier.durieu@univ-tours.fr}, Jean-Baptiste Gou\'er\'e\footnote{jean-baptiste.gouere@univ-tours.fr}, Antonin Jacquet\footnote{antonin.jacquet@univ-tours.fr}\\
\small{Institut Denis Poisson, UMR-CNRS 7013, Université de Tours}}
\maketitle

\begin{abstract}
We investigate first-passage percolation on the lattice $\Z^d$ for dimensions $d \geq 2$.
Each edge $e$ of the graph is assigned an independent copy of a non-negative random variable $\tau$.
We only assume $\P[\tau=0]<p_c(\Z^d)$, the critical probability threshold for Bernoulli bond percolation on $\Z^d$. 
We obtain lower bounds of order $n^{-\alpha_d}$ (where $\alpha_d>0$ is explicit) for the probability of having two disjoint geodesics between two pairs of neighbouring vertices at distance $n$.
Additionally, under more specific assumptions on the distribution of $\tau$, we obtain similar lower bounds for the probability of having two disjoint geodesics (except for their starting and ending points) between the same two vertices.
\end{abstract}

\section{Introduction}

\subsection{First-passage percolation} 
Let $d \ge 2$. 
We consider on $\Z^d$ the usual undirected graph structure.
In other words two vertices $x,y \in \Z^d$ are linked by an edge if the Euclidean distance between them is one.
We write $x \sim y$ when this is the case.
We denote by $\cE$ the set of edges.
A path is a finite sequence $\pi=(x_0,\dots,x_n)$ of vertices of $\Z^d$ such that, for all $i \in \{0,\dots,n-1\}$, $x_i$ and $x_{i+1}$ are neighbors, that is,
they are linked by an edge.
We say that this is a path from $x$ to $y$ if its first element is $x$ and its last element is $y$.

Let $\tau$ be a random variable with values in $[0,+\infty)$.
Let $(\tau(e))_{e \in \cE}$ be a family of independent copies of $\tau$.
If $\pi=(x_0,\dots,x_n)$ is a path, we set
\[
\tau(\pi) = \sum_{i=0}^{n-1} \tau(x_i,x_{i+1}).
\]
For any vertices $x,y \in \Z^d$ we then define $T(x,y)$ by
\begin{equation} \label{e:definiton_T}
T(x,y) = \inf_{\pi : x \to y} \tau(\pi)
\end{equation}
where the infimum is taken on all paths from $x$ to $y$.
This defines a pseudometric on $\Z^d$.
This is a metric when $\tau$ only takes positive values.

This model has been introduced by Hammersley and Welsh in \cite{Hammersley-Welsh}.
Classical references on first-passage percolation include the Saint-Flour lecture notes by Kesten \cite{Kesten-St-Flour} and the monograph by Auffinger, Damron and Hanson 
\cite{50}.

Let $p_c(\Z^d)$ be the critical threshold for Bernoulli bond percolation on $\Z^d$.
We refer to the book by Grimmett \cite{Grimmett} for background on percolation.
The behaviour of the random pseudometric $T$ depends crucially on whether $\P[\tau=0]$ is small, equal or larger than $p_c(\Z^d)$.
In the whole of this article we work under the following assumption:
\begin{equation}\label{e:running_assumption}
\P[\tau=0] < p_c(\Z^d).
\end{equation}

A path $\gamma$ between the vertices $x$ and $y$ is called a geodesic if the infimum in \eqref{e:definiton_T} is achieved with $\gamma$, that is if $T(x,y)=\tau(\gamma)$.
Under \eqref{e:running_assumption}, with probability one, there exist geodesics between any pair of vertices.
This is Proposition 4.4 in \cite{50}.
Note however that our assumption does not imply uniqueness.
For any vertices $x,y \in \Z^d$ we denote by $\Gamma(x,y)$ the set of all geodesics from $x$ to $y$:
\[
\Gamma(x,y)=\{\text{geodesics between } x \text{ and } y\}.
\]

\subsection{Further notations.}

For all $n \ge 1$ we set
\[
S_n = \{x \in \Z^d : \|x\|_\infty = n\}
\]
where $\|\cdot\|_\infty$ is the usual supremum norm.
When $\pi$ is a path we denote by $|\pi|_e$ the number of edges of the path. 
When $\pi$ and $\pi'$ are paths, we denote by $\pi \cap \pi'$ the set of vertices common to both paths.
For any set $A \subset \Z^d$ we denote by $|A|$ its cardinality and by $\partial A$ its inner boundary:
\[
\partial A = \{x \in A : \text{ there exists } y \in \Z^d \setminus A \text{ such that } x \sim y\}.
\]
For any set $B\in\R^d$ we denote by $\diam(B)$ is diameter with respect to the $\|\cdot\|_\infty$ norm.

\subsection{Main results}

Our first main result is the following theorem.
It provides some lower bound on the probability of existence of two disjoint geodesics between two pairs of neighboring vertices. Below, $e_1$ and $e_2$ stand for the two first unit vectors of the canonical basis of $\Z^d$.

\begin{theorem}\label{t:main-neighbor} Assume \eqref{e:running_assumption}. There exists $C=C(d,\,\text{law of }\tau)>0$ such that the following holds.
\begin{enumerate}
\item For all $n \ge 1$,
\[
\P\big[\text{there exists $\gamma$ in $\Gamma(0,ne_1)$ and $\gamma'$ in $\Gamma(e_2,ne_1+e_2)$ such that $\gamma \cap \gamma'=\emptyset$}\big] \ge \frac C {n^{2d}}.
\]
\item For all $n \ge 1$, there exists $y,y' \in S_n$ such that $y\sim y'$ and 
\[
\P\big[\text{there exists $\gamma$ in $\Gamma(0,y)$ and $\gamma'$ in $\Gamma(e_2,y')$ such that $\gamma \cap \gamma'=\emptyset$}\big] \ge \frac C {n^{d-1}}.
\]
\end{enumerate}
\end{theorem}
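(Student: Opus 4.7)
I would prove Part 2 first by a pigeonhole argument on $S_n$, then derive Part 1 from it by a local surgery at the endpoints. Throughout, for $y, y' \in S_n$ with $y \sim y'$, let $A_{y, y'}$ denote the event $\{\exists \gamma \in \Gamma(0, y), \gamma' \in \Gamma(e_2, y') : \gamma \cap \gamma' = \emptyset\}$, and set $\Omega_n = \bigcup_{y, y' \in S_n,\, y \sim y'} A_{y, y'}$. Since the number of adjacent pairs in $S_n$ is of order $n^{d-1}$, the pigeonhole principle reduces Part 2 to establishing the absolute lower bound $\P[\Omega_n] \ge c > 0$ for some constant $c$ independent of $n$.

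To establish $\P[\Omega_n] \ge c$, I would consider distant target vertices, say $F = M n e_1$ and $F' = F + e_2$ for some large integer $M$ independent of $n$. Let $\pi, \pi'$ be geodesics from $0$ to $F$ and from $e_2$ to $F'$ respectively (both exist almost surely under \eqref{e:running_assumption}). Both paths must cross $S_n$; denote their first crossings by $Y, Y'$. The restrictions $\pi|_{0 \to Y}$ and $\pi'|_{e_2 \to Y'}$ are themselves geodesics, and on the event $\{Y \sim Y'\} \cap \{\pi|_{0 \to Y} \cap \pi'|_{e_2 \to Y'} = \emptyset\}$, $\Omega_n$ occurs. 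By the shape theorem, $Y$ and $Y'$ concentrate near $ne_1$ with sublinear fluctuations, so adjacency together with disjointness of the restrictions should hold with positive probability via a symmetry and positivity argument rooted in translation invariance.

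For Part 1, the adjacent-endpoint pair $(y, y')$ produced by Part 2 must be upgraded to the fixed endpoints $(ne_1, ne_1 + e_2)$. I would perform a local surgery: on $A_{y, y'}$ (which holds with probability $\ge C n^{-(d-1)}$), modify edges in a region disjoint from $\gamma \cup \gamma'$, prescribing favorable weights along two short disjoint connecting paths that extend $\gamma, \gamma'$ to geodesics ending exactly at $ne_1$ and $ne_1 + e_2$. The polynomial cost $\sim n^{-(d+1)}$ of such a prescription, combined with Part 2, yields the $n^{-2d}$ bound.

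The main obstacle is the constant lower bound $\P[\Omega_n] \ge c$. A priori, the geodesics $\pi$ and $\pi'$ starting from the nearby vertices $0$ and $e_2$ may coalesce inside $\{x : \|x\|_\infty \le n\}$, forcing $Y = Y'$ or precluding disjointness. Handling this robustly---especially in $d = 2$, where coalescence of geodesics is generic---is the technical crux, and likely requires an explicit construction forcing divergent initial segments near $0$ and $e_2$ via favorable-edge events, followed by maintaining the separation through symmetry or a monotone-coupling argument.
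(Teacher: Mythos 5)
Your high-level plan differs from the paper's, and both halves have genuine gaps.

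\paragraph{Part 1.} The ``local surgery'' step cannot be carried out under the stated hypothesis \eqref{e:running_assumption} alone. Prescribing favourable edge weights on chosen edges requires some control on the law of $\tau$ (atoms with probability bounded below, a support such as $\{0,\dots,S\}$, \dots). Assumption \eqref{e:running_assumption} is compatible with a continuous law (prescribing any weight costs probability zero) or a degenerate one (nothing can be prescribed), so a ``polynomial cost $\sim n^{-(d+1)}$'' for a local modification is unjustified. This is precisely why the paper reserves modification arguments for Theorem~\ref{t:main-same}, which adds hypothesis \eqref{e:support-mu-0S}. For Item~1 of Theorem~\ref{t:main-neighbor} the paper instead works with point-to-hyperplane geodesics: Proposition~\ref{p:disjoint-half-space} gives two disjoint geodesics from $0$ and $e_2$ to $H_n$ with probability $\gtrsim 1/n$ (via an elementary chaining observation plus Lemma~\ref{l:controle-cardinal}); one records the random exit points $U=(U_0,U_1)$ on $H_n$, which live in a set $\Lambda_n$ of size $\asymp n^{d-1}$; one repeats symmetrically from $ne_1, ne_1+e_2$; and by independence across $H_n$ together with Cauchy--Schwarz, $\P[U=\overline U,\, G_n,\, \overline G_n]\ge \P[G_n]^2/|\Lambda_n|^2 \gtrsim n^{-2d}$. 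No surgery, no distributional control, and the $2d$ exponent appears as (one factor of $n$ from the $1/n$ hyperplane bound, squared) times (one factor of $n^{d-1}$ for the exit location, squared). There is also no need to derive Item~1 from Item~2; the paper proves them independently.

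\paragraph{Part 2.} Your pigeonhole reduction to showing $\P[\Omega_n]\ge c>0$ is correct and matches the paper. But the sketch for the constant lower bound (geodesics toward $Mne_1$, first crossings of $S_n$, ``symmetry and positivity'') does not deliver it: there is no reason the two first crossings are adjacent, nor that the restricted geodesics are disjoint---coalescence of geodesics started at $0$ and $e_2$ is exactly what you must beat, and you acknowledge this is the ``technical crux'' without supplying an argument. The paper supplies it via a coexistence theorem (Theorem~\ref{t:coex-general}): $\P[\underline\ell^- < \overline\ell^+]>0$ with $\overline\ell^+=\limsup_k B_{ke_2}(0,e_2)$ and $\underline\ell^-=\liminf_k B_{-ke_2}(0,e_2)$. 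On this event, with $\ell$ the midpoint, $\underline V=\{z: B_z(0,e_2)<\ell\}$ and $\overline V=\{z: B_z(0,e_2)\ge\ell\}$ partition $\Z^d$ and are both infinite, hence both meet $S_n$ and there is an adjacent pair $y\in\underline V$, $y'\in\overline V$; any geodesic from $0$ to $y$ stays in $\underline V$ (inequality \eqref{e:gamma_in_V}) and any geodesic from $e_2$ to $y'$ stays in $\overline V$, yielding disjointness. That coexistence theorem, proved in the appendix via regularized passage times (Cerf--Th\'eret) and a Ces\`aro argument following Hoffman, is the missing ingredient in your proposal.
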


Here is our second main result.
It provides some lower bound on the probability of existence of two disjoint geodesics between two vertices.

\begin{theorem}\label{t:main-same} Assume \eqref{e:running_assumption} and the existence of an integer $S \ge 1$ such that
\begin{equation}\label{e:support-mu-0S}
\text{the support of the law of } \tau  \text{ is the set } \{0,1,\dots,S\}.
\end{equation}
There exists $C=C(d,\,\text{law of }\tau)>0$ such that the following holds.
\begin{enumerate}
\item For all $n \ge 1$,
\[
\P\big[\text{there exists } \gamma \text{ and } \gamma' \text{ in } \Gamma(0,ne_1) \text{ such that } \gamma \cap \gamma' = \{0,ne_1\}\big] \ge \frac C {n^{2d}}.
\]
\item For all $n \ge 1$ there exists $u \in S_n$ such that
\[
\P\big[\text{there exists } \gamma \text{ and } \gamma' \text{ in } \Gamma(0,u) \text{ such that } \gamma \cap \gamma' = \{0,u\}\big] \ge \frac C {n^{d-1}}.
\]
\end{enumerate}
\end{theorem}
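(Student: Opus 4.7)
The plan is to derive Theorem~\ref{t:main-same} from Theorem~\ref{t:main-neighbor} by a local surgery argument that exploits the bounded-integer structure imposed by \eqref{e:support-mu-0S}. I focus on part~(1); part~(2) will follow by applying the identical surgery to the conclusion of Theorem~\ref{t:main-neighbor}(2). The starting point is the event $\cA_n$ of Theorem~\ref{t:main-neighbor}(1): there exist disjoint geodesics $\gamma \in \Gamma(0, ne_1)$ and $\gamma' \in \Gamma(e_2, ne_1 + e_2)$. Writing $T_1 = T(0, ne_1)$ and $T_2 = T(e_2, ne_1 + e_2)$ and comparing via the unit edges $(0, e_2)$ and $(ne_1, ne_1 + e_2)$ together with $0 \le \tau \le S$ yields $|T_1 - T_2| \le 2S$, and by \eqref{e:support-mu-0S} this difference is an integer.

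Fix a large constant $R = R(S, d)$ and consider the boxes $B_0 = \{x \in \Z^d : \|x\|_\infty \le R\}$ and $B_1 = ne_1 + B_0$, which are disjoint for $n$ large. The idea is to expose the edges outside $B_0 \cup B_1$ and independently resample the finitely many edges with at least one endpoint inside the two boxes. Under \eqref{e:support-mu-0S}, each value $k \in \{0, 1, \dots, S\}$ is attained with positive probability, so any specific prescribed configuration inside the boxes has probability bounded below by a positive constant. Up to a constant multiplicative loss, I would refine $\cA_n$ to a sub-event depending only on edges outside $B_0 \cup B_1$, asserting the existence of disjoint paths $\tilde\gamma, \tilde\gamma'$ connecting \emph{fixed} exit vertices $a, a' \in \partial B_0$ to fixed exit vertices $b, b' \in \partial B_1$ through the complement of $B_0 \cup B_1$, where these paths are geodesic for the outside-edge weights. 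I would then resample the inside edges with a prescribed configuration realizing, inside $B_0$, two internally disjoint paths from $0$ to $a$ and from $0$ to $a'$, and inside $B_1$, two internally disjoint paths from $ne_1$ to $b$ and from $ne_1$ to $b'$, tuned so that the two concatenated paths $0 \to a \to b \to ne_1$ and $0 \to a' \to b' \to ne_1$ have equal total $\tau$-length. The integer structure of $\tau$ is essential here: the length compensation needed to equalize the two paths is an integer bounded by $2S + O(R)$, well within reach using a bounded number of edges with weights in $\{0, 1, \dots, S\}$.

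The main technical obstacle is to ensure that, after resampling, no shortcut through $B_0$ or $B_1$ destroys the geodesic property of either of the two constructed paths. I would handle this by forcing every edge in $B_0 \cup B_1$ not used by our construction to have the maximum weight $S$: any alternative crossing of a box then costs at least $S$ extra units per edge compared to the constructed paths, so choosing $R$ large enough in terms of $S$ rules out competing shorter paths. Combining the probability of (the refined) $\cA_n$ from Theorem~\ref{t:main-neighbor}(1) with the constant-order probability of the prescribed box configuration yields the lower bound $C/n^{2d}$ of part~(1). For part~(2), the identical surgery applied to the event of Theorem~\ref{t:main-neighbor}(2) produces two geodesics between $0$ and a fixed vertex $u \in S_n$ meeting only at the endpoints -- one may take $u = y$ -- which gives the bound $C/n^{d-1}$.
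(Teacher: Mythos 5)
Your strategy---modify the environment in boxes around $0$ and $ne_1$ starting from the event of Theorem~\ref{t:main-neighbor}(1)---is precisely the route the paper flags as failing (see the remark: ``Item 1 of Theorem~\ref{t:main-same} can not be deduced (as shown by counter-examples) from Item 1 of Theorem~\ref{t:main-neighbor}''). Enlarging the modification region from the two single edges to boxes of radius $R$ does not remove the obstruction, and the step where you ``refine $\cA_n$ to a sub-event depending only on edges outside $B_0\cup B_1$'' is exactly where the gap lies. The event $\cA_n$ gives you information only about $T(0,ne_1)$ and $T(e_2,ne_1+e_2)$ and the corresponding geodesics; it gives no control over the \emph{cross} quantities $T(e_2,ne_1)$, $T(0,ne_1+e_2)$, nor over the outside point-to-point times $T^{\mathrm{out}}(c,d)$ for boundary vertices $c\in\partial B_0$, $d\in\partial B_1$ other than the exit points $a,b$ and $a',b'$ of your two arms. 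After you resample inside the boxes, a competing path can leave $B_0$ at some $c\neq a,a'$, paying only $O(S)$ extra inside $B_0$ (one deviation from your cheap constructed path near its tip), while gaining $T^{\mathrm{out}}(a,b)-T^{\mathrm{out}}(c,d)$ outside---a quantity which is uncontrolled and can be of order $RS$ in its favour. Forcing the non-constructed interior edges to weight $S$ helps only against crossings that stay deep inside a box; it does nothing against a competitor that peels off at the last step before $\partial B_0$. Making $R$ large to penalize interior shortcuts makes the outside fluctuation worse, and making $R$ small leaves no room to compensate the (possibly $\Theta(R^dS)$) weight imbalance between the two arms. So the constructed paths need not be geodesics after surgery, and the inclusion you implicitly rely on fails.

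The paper sidesteps this by performing the modification in the point-to-hyperplane picture (Proposition~\ref{p:disjoint-half-space-same}), where both arms $\gamma_0\in\Gamma_0$ and $\gamma_1\in\Gamma_1$ are geodesics to the \emph{same} target $H_n$. Then a single edge modification at $\{0,e_2\}$ suffices, and the key inequality $\overline T(0,H_n)\ge\min\big(T(0,H_n),\Delta+T(e_2,H_n)\big)$ is closed because $T(e_2,H_n)=\tau(\gamma_1)$ is precisely the controlled quantity; there is no uncontrolled cross term. The point-to-point statement is then recovered by the Cauchy--Schwarz symmetrization (Claims~\ref{c:symmetrisation} and~\ref{c:curry}), which forces the endpoints on $H_n$ to match. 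For Item~2 the paper likewise does \emph{not} modify starting from Theorem~\ref{t:main-neighbor}(2); it starts from a finer coexistence event (Claims~\ref{c:coex-K}--\ref{c:final-coex-chirurgie}) where it has control on all four times $T(0,y),T(0,y'),T_K(e_2,y),T_K(e_2,y')$, which is exactly what your surgery is missing. To salvage your approach you would need to first strengthen the input event so that these cross passage times are controlled, at which point you are essentially redoing the paper's intermediate steps.
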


\subsection{Comments}

\paragraph{Upper bounds.} In \cite{Dembin-Elboim-Peled}, Dembin, Elboim, and Peled prove a result providing, as a particular consequence, an upper bound for the probability of the existence of disjoint geodesics. 
They work in the two dimensional setting under the following assumptions: 
the law of $\tau$ is absolutely continuous and admits some exponential moments, the asymptotic shape possesses more than $32$ sides.
As the law of $\tau$ is absolutely continuous, for any pair of vertices $u,v$ there exists a unique geodesic which we denote by $\gamma(u,v)$.
Their result implies the existence of $C>0$ and $\alpha>0$ such that
\[
\P\big[\text{there exists $\gamma$ in $\Gamma(0,ne_1)$ and $\gamma'$ in $\Gamma(e_2,ne_1+e_2)$ such that $\gamma$ and $\gamma'$ are disjoint}\big] \le \frac C {n^\alpha}.
\]
Their result is actually much stronger as it gives (in a quantitative way) an upper bound for the probability of existence of non coalescing geodesics starting near
some point and ending near another point.
We refer to \cite[Theorem 1.1]{Dembin-Elboim-Peled} for more details.

\paragraph{Comments on the results of Theorem 2.} Assume that there exists two disjoint geodesics $\gamma$ and $\gamma'$ between two points $x$ and $y$.
If we lower the passage time of any edge on $\gamma$, then any geodesic must use this edge.
This suggests that having two disjoint geodesics is very fragile, and it may suggest that the probability of such an event is sub-exponential in $\|x-y\|$.
Theorem 2 shows that this is not the case.

\paragraph{Comments on the proofs.}
The proofs of $2d$ exponent results rely on some simple averaging and symmetrization tricks and on a modification argument.
As an intermediate step (before symmetrization) we get similar results for point to hyperplane geodesics (see Propositions \ref{p:disjoint-half-space} and \ref{p:disjoint-half-space-same}).
Note that Item 1 of Theorem \ref{t:main-same} can not be deduced (as shown by counter-examples) from Item 1 of Theorem \ref{t:main-neighbor} by modifying the passage times of the edges $\{0,e_2\}$ and $\{ne_1,ne_1+e_2\}$.
In order to get Item 1 of Theorem \ref{t:main-same}, we therefore perform the modification argument on the intermediate step and then symmetrize.

Item 2 of Theorem \ref{t:main-neighbor} is (at least under some additional mild assumptions) a simple consequence of coexistence results.
We give more details in Section \ref{s:coex-intro}.
Item 2 of Theorem \ref{t:main-same} uses a modification argument.
As above and for the same reason, we do not perform the modification argument starting from the event of Item 2 of Theorem \ref{t:main-neighbor}.
Instead we start from a more restricted event where we have information on the four passage times $T(0,y), T(0,y'), T(e_2,y)$ and $T(e_2,y')$.

\paragraph{Comments on $2d$ exponent results \textit{vs} $d-1$ exponent results.}
\begin{itemize}
\item Obviously, $d-1$ exponent bounds are better than $2d$ exponent bounds.
\item The $2d$ exponent results are obtained with a simple proof and provide a result for a given pair of endpoints ($ne_1$ or $ne_1+e_2$) or a given endpoint ($ne_1$).
However, the proof relies on a symmetrization trick which prevents giving a result for arbitrary given pair of endpoints or arbitrary single endpoints.
Moreover, this symmetrization makes heavy use of the i.i.d.\ setting.
\item The $d-1$ exponent results relies on coexistence results in competition models. 
The same proof would work in a stationary and ergodic context with appropriate integrability assumptions (for both theorems) and appropriate finite energy assumptions (for the 
second theorem).
In this article we stick to the i.i.d.\ framework for simplicity.
\end{itemize}

\paragraph{Comments on the assumptions of Theorem 2.} Some assumptions are needed for this kind of results.
For example, if the law of $\tau$ is continuous, then there exists a unique geodesics between any two given points.
The assumption \eqref{e:support-mu-0S} is not optimal but it already encompass a rather large class of distributions.
Some weaker assumptions are possible, at the expense of lengthening the proof and/or worsening the exponent.
We refrain from developing these alternate assumptions in the article in order to keep it short and non technical.
However, we do not know what the optimal assumptions would be.
We emphasize it below as an open question.

\paragraph{Open question.} Under which condition on the law of $\tau$ does there exist an exponent $\alpha>0$ such that the probabilities studied in Theorem \ref{t:main-same} 
are of order at least $\frac 1 {n^{\alpha}}$ ?

\paragraph{Related results.} 
The closest results we are aware of is the above mentioned upper bound provided by \cite{Dembin-Elboim-Peled}
and the above mentioned results on coexistence.
Let us very quickly review some recent results on (mostly semi-infinite) geodesics.
The recent works by Ahlberg and Hoffman \cite{Ahlberg-Hoffman} and Ahlberg, Hanson and Hoffman \cite{Ahlberg-Hanson-Hoffman} 
provide a very detailed qualitative picture of semi-infinite geodesics in planar first-passage percolation under very mild assumptions. 
We highlight two results among the many established.
In \cite{Ahlberg-Hoffman}, they prove that the probability of the geodesic from $u$ to $v$ passing through the origin tends to $0$ as soon as both $\|u\|$ and $\|v\|$ tend to infinity.
In \cite{Ahlberg-Hanson-Hoffman}, they establish the following convergence result where $\cT_0$ is the tree of semi-infinite geodesics originating from $0$:
\[
\limsup_{\|v\| \to \infty} \P[ v \in \cT_0] = 0.
\]
The works in  \cite{Ahlberg-Hanson-Hoffman} and \cite{Ahlberg-Hoffman}  partly rely on ideas developed by Hoffman in \cite{Hoffman2} and Damron and Hanson in \cite{Damron-Hanson-14}.
See also \cite{Auffinger-Damron-Hanson-demi-plan}, \cite{Brito-Damron-Hanson} and \cite{Damron-Hanson-17} for this line of research.

Another direction of research is developed by Alexander in \cite{Alexander}.
He works under strong assumptions on fluctuations (governed by an exponent $\chi$) and curvature of the limiting shape.
He proves non existence of bigeodesics and derives  sharp quantitative results on coalescence of semi-infinite geodesics with the same direction.
In particular, when $d=2$, the tail behaviour for the distance to the coalescing point of two semi-infinite geodesics with the same asymptotic direction behaves
as $r^{-\xi}$ where $\xi=(1+\chi)/2$.
See \cite[Theorem 1.5]{Alexander}.

We refer to \cite{50} for a review of the literature on first-passage percolation up to 2015.

We turn to exponential two-dimensional last-passage percolation. This is an exactly solvable model.
We  single out a few results related to disjointness or coalescence of geodesics.
In \cite{Basu-Hoffman-Sly}, Basu, Hoffman and Sly prove that there exists no bi-infinite geodesic (apart from the trivial vertical and horizontal ones).
They also provide an upper bound on the existence of many disjoint geodesics of length of order $n$ starting and ending in intervals of length of order $n^{2/3}$.
This result is used by Basu, Sarkar and Sly in \cite{Basu-Sarkar-Sly} and by Zhang in \cite{Zhang}.
In \cite{Basu-Sarkar-Sly}, they provide  sharp estimates for the tail behaviour of the distance to coalescence of semi-infinite geodesics with the same direction (in $r^{-2/3}$).
They also give a power law upper bound on the tail behaviour of the same quantity for finite geodesics.
A  sharp estimate (in $r^{-2/3}$) of the same quantity is given in \cite{Zhang}.

\section{Proof of Item 1 of Theorems \ref{t:main-neighbor} and \ref{t:main-same}}

\subsection{Some preliminary results}

Let $H_n$ denote the hyperplane 
\[
H_n = \{(x_1,\dots,x_d) \in \R^d : x_1 = n/2\}=\left\{\frac{n}{2}\right\}\times\R^{d-1}.
\]
For $x\in \Z^d$, we consider geodesics from $x$ to $H_n$. 
In the case $n$ even, a geodesic from $x$ to $H_n$ is a path $\gamma$ from $x$ to some $y\in H_n\cap\Z^d$ that achieves the infimum
of $T(x,z)$ over all $z\in H_n\cap\Z^d$. In the case $n$ odd, geodesics from $x$ to $H_n$ has to be understand in the same way but after the following (invisible) transformation of the model: 
we add to $\Z^d$ the set of vertices $\{\frac{n}{2}\}\times \Z^{d-1}$, every edge $e$ between vertices of the form $(\frac{n-1}{2},x_2,\ldots,x_d)$ and $(\frac{n+1}{2},x_2,\ldots,x_d)$ is replaced by the two edges $e'=\{(\frac{n-1}{2},x_2,\ldots,x_d),(\frac{n}{2},x_2,\ldots,x_d)\}$ and $e''=\{(\frac{n}{2},x_2,\ldots,x_d),(\frac{n+1}{2},x_2,\ldots,x_d)\}$, and we set $\tau(e')=\tau(e'')=\tau(e)/2$. 
The choice of separating the time into two equal parts allows to preserve the symmetry of the environment on both sides of the hyperplane $H_n$. This is a central property in what follows.
Note that this transformation does not change any geodesic time or geodesic path between vertices of $\Z^d$.

Under \eqref{e:running_assumption}, the almost sure existence of point-to-hyperplane geodesics can be established in the same way as Proposition~4.4 in \cite{50}.  
For all $x\in \Z^d$, $\Gamma(x,H_n)$ denotes the set of all geodesics from $x$ to $H_n$, and to shorten it below, for all $i \in \Z$ we set $\Gamma_i=\Gamma(ie_2,H_n)$.

\begin{lemma} \label{l:controle-cardinal} Under \eqref{e:running_assumption}, there exists $C_1,C_2>0$ such that, for all $n \ge 1$,
\[
\P\left[\diam\left(\bigcup_{\gamma_0 \in \Gamma_0} \gamma_0\right) \le C_2n\right] \ge 1 - \exp(-C_1 n).
\]
\end{lemma}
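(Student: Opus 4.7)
The plan is to sandwich the spatial extent of geodesics in $\Gamma_0$ between an upper bound on $T(0,H_n)$ and the classical positive--time--constant lower bound on $T(0,x)$. Since any sub-path of a geodesic is itself a geodesic, every vertex $x$ lying on some $\gamma\in\Gamma_0$ satisfies $T(0,x)\le T(0,H_n)$; combining this with a lower bound $T(0,x)\ge\rho\|x\|_\infty$ for $\|x\|_\infty$ large and an upper bound $T(0,H_n)\le K_1 n$ forces all such $x$ into $\{\|x\|_\infty\le K_1 n/\rho\}$. Since $0\in\bigcup_{\gamma_0\in\Gamma_0}\gamma_0$, this union has diameter at most $2K_1 n/\rho$, so it suffices to establish both bounds with exponential tails in $n$ and combine them by a union bound.

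For the lower bound, I would invoke the following classical estimate, valid under \eqref{e:running_assumption}: there exist constants $\rho>0$ and $c_2>0$ such that
\[
\P\bigl[T(0,x)\le\rho\|x\|_\infty\bigr]\le\exp(-c_2\|x\|_\infty)
\]
for all $x\in\Z^d$ with $\|x\|_\infty$ large enough; see e.g.\ \cite{Kesten-St-Flour} or \cite{50}. Heuristically: fix $\rho>0$ small enough that $\P[\tau\le\rho]<p_c(\Z^d)$, which is possible by right-continuity since $\P[\tau=0]<p_c(\Z^d)$, and rule out a low-weight path via a dual subcritical-percolation argument. No moment assumption on $\tau$ is needed here.

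For the upper bound $T(0,H_n)\le K_1 n$ with probability $\ge 1-\exp(-c_1 n)$, I would route through the supercritical ``cheap'' cluster $\cC_\infty$ of edges with $\tau\le\tau_0$, where $\tau_0>0$ is chosen (again by right-continuity) so that $\P[\tau\le\tau_0]>p_c(\Z^d)$. A renormalization argument of Grimmett--Marstrand type, combined with Antal--Pisztora-style chemical distance estimates, produces, with exponentially high probability, a path from a vertex near $0$ to a vertex of $H_n\cap\Z^d$ entirely supported on cheap edges and of graph length $O(n)$, hence of passage time at most $\tau_0\cdot O(n)$. A short initial bridge from $0$ to the cheap-cluster entry point, whose length has good tail estimates coming from the subcritical ``bad'' percolation of edges with $\tau>\tau_0$, completes the construction.

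Assembling the two estimates, set $C_2=4K_1/\rho$ and decompose
\[
\P\bigl[\diam>C_2 n\bigr]\le \P\bigl[T(0,H_n)>K_1 n\bigr]+\sum_{k>C_2 n/2}|S_k|\sup_{\|x\|_\infty=k}\P\bigl[T(0,x)\le K_1 n\bigr];
\]
both terms are exponentially small in $n$, using $|S_k|=O(k^{d-1})$ and the tail in the lower-bound step (note that for $k>C_2 n/2$ we have $K_1 n<\rho k/2$ so the supremum is bounded by $\exp(-c_2 k/2)$ for $k$ large). The main obstacle is controlling $T(0,H_n)$ with exponential probability: in the absence of any moment assumption on $\tau$ one cannot simply bound it by the weight of a straight-line path, which may be heavy-tailed; the cheap-cluster construction above is the technical heart of the argument, as it replaces concentration of possibly heavy-tailed i.i.d.\ sums by percolation estimates on chemical distances in the supercritical cluster.
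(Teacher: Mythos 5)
Your approach hinges on bounding $\P[T(0,H_n)>K_1 n]\le e^{-c_1 n}$, and this step fails under the bare running assumption \eqref{e:running_assumption}, which imposes no moment condition on $\tau$. Every path from $0$ must use an edge incident to $0$, so $T(0,H_n)\ge\min_{e\sim 0}\tau(e)$ and hence $\P[T(0,H_n)>K_1 n]\ge\P[\tau>K_1 n]^{2d}$; for a $\tau$ with polynomially decaying tails this is only polynomially small in $n$, not exponentially. Your cheap-cluster construction correctly controls the passage time of the portion of the route inside the supercritical cluster, but the ``short initial bridge'' from $0$ to the cluster only has controlled \emph{graph length}; its \emph{passage time} can be arbitrarily large because the bridge necessarily traverses the heavy edges surrounding $0$. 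You flag this as the technical heart of the argument, but the percolation estimates you invoke do not actually close it: no upper tail on $T(0,H_n)$ of the required strength exists in general.

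The paper avoids ever bounding $T(0,H_n)$ or $T(0,x)$ from above. It shows (via its Lemma~\ref{l:rencontre_I}) that, with exponentially high probability, \emph{every} path from $0$ to $\partial B_{\lfloor n/10\rfloor}$ meets the infinite cluster $I$ of $M$-cheap edges; in particular, any geodesic $\gamma_0\in\Gamma_0$ passes through some vertex $x\in I$ while still inside $B_{\lfloor n/10\rfloor}$. Then Antal--Pisztora (Lemma~\ref{l:AP}) gives $T(x,H_n)\le 2\rho Mn$, and Kesten's Proposition~5.8 (your lower-bound ingredient, Lemma~\ref{l:5.8-Saint-Flour}) is applied to the sub-geodesic of $\gamma_0$ \emph{from $x$ onward} to bound its length. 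The initial segment from $0$ to $x$ requires no passage-time estimate at all: it is spatially confined to $B_{\lfloor n/10\rfloor}$ by construction. This shift — confining the initial segment spatially via the cluster-hitting event, rather than in passage time — is the idea missing from your proposal, and it is precisely what makes the lemma hold with no moment hypothesis on $\tau$.
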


This lemma is essentially Theorem 6.2 of \cite{Auffinger-Damron-Hanson-Adv-Math-2015}.
The only difference is that Theorem 6.2 deals with point to point geodesics.
The proof in the point to hyperplane case is almost identical.
We provide it in Section \ref{s:proof:l:controle-cardinal}.
Note that the result is straightforward if the support $\nu$ belongs to $[a,b]$ for some $b>a>0$ 
(In this case, each geodesic from $0$ to $H_n$ has travel time at mot $nb$ and therefore length at most $nb/a$.)

The following proposition is the key step and its proof relies on a very simple observation.

\begin{prop} \label{p:disjoint-half-space} Under \eqref{e:running_assumption}, there exists $C_3>0$ such that, for all $n \ge 1$ large enough,
\[
\P[\text{there exist $\gamma_0 \in \Gamma_0$ and $\gamma_1 \in \Gamma_1$ such that $\gamma_0 \cap \gamma_1 = \emptyset$}] \ge \frac{C_3}{n}.
\]
\end{prop}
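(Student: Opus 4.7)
The plan is to lower-bound the probability of the event
\[
B_0:=\{\exists\,\gamma\in\Gamma_0,\gamma'\in\Gamma_1:\gamma\cap\gamma'=\emptyset\}
\]
via translation invariance in the $e_2$-direction combined with Lemma~\ref{l:controle-cardinal}. Writing $B_k$ for the analogous event with starting points $(ke_2,(k+1)e_2)$ in place of $(0,e_2)$, the shift-invariance of the environment along $e_2$ yields $\P[B_k]=\P[B_0]$ for all $k$, so it suffices to bound $\P[\bigcup_{k=0}^{N-1}B_k]$ from below and divide by $N=\Theta(n)$.

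The simple observation at the heart of the proof concerns the sets of endpoints
\[
\mathcal E_k:=\bigl\{v\in H_n:T(ke_2,v)=T(ke_2,H_n)\bigr\}:
\]
it asserts that $\neg B_k$ implies $\mathcal E_k=\mathcal E_{k+1}$. Indeed, on $\neg B_k$, any $\gamma_k\in\Gamma_k$ ending at $E''\in\mathcal E_k$ and any $\gamma_{k+1}\in\Gamma_{k+1}$ ending at $E'\in\mathcal E_{k+1}$ must share a vertex $v$. Since a subpath of a geodesic is a geodesic, the fact that $\gamma_k$ passes through $v$ gives $T(ke_2,v)+T(v,E'')=T(ke_2,H_n)$; combining this with $T(v,E'')\geq T(v,H_n)$ and $T(ke_2,v)+T(v,H_n)\geq T(ke_2,H_n)$ forces $T(v,E'')=T(v,H_n)$, and the same argument applied to $\gamma_{k+1}$ gives $T(v,E')=T(v,H_n)$. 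Hence
\[
T(ke_2,E')\leq T(ke_2,v)+T(v,E')=T(ke_2,v)+T(v,H_n)=T(ke_2,H_n),
\]
so $E'\in\mathcal E_k$; swapping the roles of $\gamma_k$ and $\gamma_{k+1}$ yields the reverse inclusion.

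To conclude, I set $N:=2\lceil C_2 n\rceil+1$ and apply Lemma~\ref{l:controle-cardinal} at $0$ and at $Ne_2$ (using translation invariance of the environment for the latter). With probability at least $1-2\exp(-C_1 n)\geq 1/2$ for $n$ large, every geodesic from $0$ to $H_n$ lies in $[-C_2 n,C_2 n]^d$ while every geodesic from $Ne_2$ to $H_n$ lies in $Ne_2+[-C_2 n,C_2 n]^d$; these two boxes are disjoint by the choice of $N$, so $\mathcal E_0$ and $\mathcal E_N$ are disjoint non-empty subsets of $H_n$, in particular $\mathcal E_0\neq \mathcal E_N$. Iterating the observation, $\mathcal E_0\neq \mathcal E_N$ forces some $B_k$ to hold for $k\in\{0,\dots,N-1\}$; a union bound together with $\P[B_k]=\P[B_0]$ then gives
\[
\tfrac12\leq \P\Bigl[\bigcup_{k=0}^{N-1}B_k\Bigr]\leq N\,\P[B_0],
\]
hence $\P[B_0]\geq 1/(2N)\geq C_3/n$. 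The only non-routine step is the endpoint-set observation, whose proof boils down to the fact that a subpath of a geodesic is a geodesic, exactly in the spirit of the ``very simple observation'' mentioned above the statement.
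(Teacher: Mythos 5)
Your proof is correct and takes essentially the same approach as the paper. The paper propagates a single geodesic endpoint $s$ along the chain of starting points $ie_2$ via an inductive concatenation argument, whereas you propagate the entire endpoint set $\mathcal{E}_k$; in both cases the key observation is that failure of $B_k$ forces the endpoint structure at $ke_2$ and $(k+1)e_2$ to coincide, and the contradiction with Lemma~\ref{l:controle-cardinal} and the union bound are identical.
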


\begin{proof} Fix $C_1, C_2$ as in the statement of Lemma \ref{l:controle-cardinal}.
Increasing $C_2$ if necessary, we can assume that $C_2$ is an integer.
For all $i,j \in \Z$, set
\[
A_n(i,j) = \{\text{there exist $\gamma_i \in \Gamma_i$ and $\gamma_j \in \Gamma_j$ such that $\gamma_i \cap \gamma_j = \emptyset$}\}.
\]
Note that we aim at giving a lower bound on $A_n(0,1)$.
Set $C = 2C_2  +1$ and consider the event
\[
M_n = \bigcap_{i=0}^{Cn-1} A_n(i,i+1)^c.
\]
Let us prove the inclusion 
\begin{equation}\label{e:key-observation}
M_n \subset \{\text{there exist $\gamma_0 \in \Gamma_0$ and $\gamma_{Cn} \in \Gamma_{Cn}$ such that $\gamma_0 \cap \gamma_{Cn} \neq \emptyset$}\}.
\end{equation}
Assume that $M_n$ occurs.
Fix $\gamma_0 \in \Gamma_0$ and denote by $s$ the endpoint of $\gamma_0$ on $H_n$.
Let us prove  by induction on $i$ the following property:
\begin{equation}\label{e:by_induction}
\text{for all $i \in \{0,\dots,Cn\}$, there exists $\gamma_i \in \Gamma_i$ such that $s$ is the endpoint $\gamma_i$}.
\end{equation}
Note that this will imply \eqref{e:key-observation}.
The property is true for $i=0$ by definition of $s$.
Let $i \in \{1,\dots,Cn-1\}$ be such that the property holds.
Let $\gamma_i \in \Gamma_i$ with endpoint $s$.
Let $\gamma_{i+1} \in \Gamma_i$.
As $A_n(i,i+1)$ occurs, there exists a vertex $c \in \Z^d$ which belongs to $\gamma_i$ and $\gamma_{i+1}$.
Denote by $\gamma'_{i+1}$ the concatenation of the path $\gamma_{i+1}$ from $(i+1)e_2$ to $c$ and of the path $\gamma_i$ from $c$ to $H_n$.
Then $\gamma'_{i+1} \in \Gamma_{i+1}$ and its endpoint is $s$.
Thus the required property holds for $i+1$ and \eqref{e:by_induction} follows by induction.
As \eqref{e:key-observation} is a consequence of \eqref{e:by_induction}, the proof of \eqref{e:key-observation}  is complete.

As $C=2C_2+1$, if
\[
\{\text{there exist $\gamma_0 \in \Gamma_0$ and $\gamma_{Cn} \in \Gamma_{Cn}$ such that $\gamma_0 \cap \gamma_{Cn} \neq \emptyset$}\} 
\]
holds, then there exists a path in $\Gamma_0$ or in $\Gamma_{Cn}$ whose diameter is strictly larger than $C_2n$.
Therefore \eqref{e:key-observation} and Lemma \ref{l:controle-cardinal} yield
\[
\P[M_n] \le 2 \exp(-C_1 n).
\]
In particular, for all $n$ large enough, $\P[M_n^c] \ge 1/2$.
For all such $n$, we thus have, by union bound and stationarity,
\[
\P[A_n(0,1)] \ge \frac{1}{2Cn}.
\]
This ends the proof.
\end{proof}

Item 1 of Theorem \ref{t:main-neighbor} is a consequence of Lemma \ref{l:controle-cardinal}, Proposition \ref{p:disjoint-half-space} and of a symmetrization  trick.
The proof of Item 1 of Theorem \ref{t:main-same} follows the same place but we need a strengthening of Proposition \ref{p:disjoint-half-space} which we state and prove now.

\begin{prop}\label{p:disjoint-half-space-same} Assume  \eqref{e:running_assumption} and \eqref{e:support-mu-0S}. There exists $C'_3>0$ such that, for all integer $n$ large enough, 
\[
\P[\text{there exist } \gamma, \gamma' \in \Gamma_0 \text{ such that } \gamma \cap \gamma' = \{0\}] \ge \frac{C'_3}n.
\]
\end{prop}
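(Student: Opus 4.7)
The plan is to combine Proposition~\ref{p:disjoint-half-space} with a finite-energy/modification argument on the single edge $e := \{0, e_2\}$. Write $\omega^{[e]}$ for the environment restricted to edges different from $e$, and let $T_0^{[e]}$ and $T_1^{[e]}$ denote the minimal passage times from $0$ and $e_2$ to $H_n$ over paths avoiding $e$; both are $\omega^{[e]}$-measurable. The first observation is that on the event $A_n(0,1)$ of Proposition~\ref{p:disjoint-half-space}, any witnessing disjoint pair $\gamma_0 \in \Gamma_0$, $\gamma_1 \in \Gamma_1$ automatically avoids $e$, since $\gamma_0$ cannot meet $e_2$ nor $\gamma_1$ meet $0$. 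Hence $T_0 = T_0^{[e]}$ and $T_1 = T_1^{[e]}$ on this event, and using the single-edge path $0 \to e_2$ gives $|T_0^{[e]} - T_1^{[e]}| \le \tau(e) \le S$.

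The key deterministic modification lemma is then: in any configuration of $\omega^{[e]}$ for which disjoint paths $\gamma_0$ (from $0$) and $\gamma_1$ (from $e_2$) to $H_n$, avoiding $e$ and with $\tau(\gamma_j) = T_j^{[e]}$, exist and for which $T_0^{[e]} - T_1^{[e]} \in \{0,\dots,S\}$, setting $\tau(e) := T_0^{[e]} - T_1^{[e]}$ makes both $\gamma_0$ and $\gamma'' := (0,e_2) \cdot \gamma_1$ geodesics from $0$ to $H_n$ intersecting only at $0$. Indeed $\tau(\gamma'') = \tau(e) + T_1^{[e]} = T_0^{[e]} = \tau(\gamma_0)$; and any simple path from $0$ to $H_n$ using $e$ must traverse $e$ as its very first edge (because $0$ is visited only at the start) and then follow a path from $e_2$ to $H_n$ avoiding $0$ and therefore avoiding $e$, so its total time is at least $\tau(e) + T_1^{[e]} = T_0^{[e]}$. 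The disjointness $\gamma_0 \cap \gamma'' = \{0\}$ follows from $\gamma_0 \cap \gamma_1 = \emptyset$ together with $0 \notin \gamma_1$.

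To turn this into a probabilistic lower bound, introduce the $\omega^{[e]}$-measurable event
\[
F := \big\{\exists \text{ disjoint } \gamma_0, \gamma_1 \text{ from } 0, e_2 \text{ to } H_n, \text{ avoiding } e, \text{ with } \tau(\gamma_j) = T_j^{[e]}\big\} \cap \big\{0 \le T_0^{[e]} - T_1^{[e]} \le S\big\}.
\]
The first paragraph yields $A_n(0, 1) \cap \{T_0 \ge T_1\} \subseteq F$. The reflection $\sigma : (x_1, x_2, x_3, \dots, x_d) \mapsto (x_1, 1 - x_2, x_3, \dots, x_d)$ swaps $0 \leftrightarrow e_2$, fixes $H_n$, and preserves the joint law of $(\tau(e'))_{e' \in \cE}$; hence $\P(A_n(0, 1) \cap \{T_0 \ge T_1\}) \ge \P(A_n(0, 1))/2 \ge C_3/(2n)$ by Proposition~\ref{p:disjoint-half-space}, and consequently $\P(F) \ge C_3/(2n)$. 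Since $\tau(e)$ is independent of $\omega^{[e]}$ and assumption~\eqref{e:support-mu-0S} guarantees $p_{\min} := \min_{0 \le k \le S} \P(\tau = k) > 0$, conditioning on $\omega^{[e]}$ gives
\[
\P\big(F \cap \{\tau(e) = T_0^{[e]} - T_1^{[e]}\}\big) \ge p_{\min}\, \P(F) \ge \frac{p_{\min}\, C_3}{2n},
\]
and by the modification lemma this bounds from below the probability in Proposition~\ref{p:disjoint-half-space-same}, yielding $C'_3 := p_{\min} C_3 / 2$.

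The main obstacle I expect is the clean statement and verification of the modification lemma, in particular the identity $T(0, H_n) = T_0^{[e]}$ after the resampling of $\tau(e)$: this rests on restricting to simple paths and exploiting that $0$ is visited at most once, which forces any use of $e$ to occur as the very first edge. The rest is bookkeeping to keep straight which events live on $\omega^{[e]}$ versus on the full environment, so that the independence of $\tau(e)$ can legitimately be invoked.
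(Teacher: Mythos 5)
Your proof has the right skeleton and closely mirrors the paper's strategy (resample the edge $\{0,e_2\}$ to splice $\gamma_1$ onto a new geodesic), but it has a genuine gap: it does not address the half-integer complication that arises when $n$ is odd.

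Recall how point-to-$H_n$ passage times are defined in the odd case: the edge crossing $H_n$ is split into two half-edges, each carrying $\tau(e)/2$. Consequently $T_0^{[e]}$ and $T_1^{[e]}$ each equal (integer) $+\,\tau(E)/2$ where $E$ is the crossing edge of the corresponding geodesic, and under \eqref{e:support-mu-0S} these can be half-integers. Their difference $T_0^{[e]} - T_1^{[e]}$ can therefore lie in $\tfrac12 + \Z$. Your event $F$ only imposes the real inequality $0 \le T_0^{[e]} - T_1^{[e]} \le S$, so on a potentially large part of $F$ the quantity $T_0^{[e]} - T_1^{[e]}$ is not in the support $\{0,1,\dots,S\}$ of $\tau$. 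On that part, $\P\bigl[\tau(e) = T_0^{[e]} - T_1^{[e]} \mid \omega^{[e]}\bigr] = 0$, so the inequality
\[
\P\bigl(F \cap \{\tau(e) = T_0^{[e]} - T_1^{[e]}\}\bigr) \ge p_{\min}\,\P(F)
\]
is false as stated. You could instead add the constraint $T_0^{[e]} - T_1^{[e]} \in \Z$ to the definition of $F$, but then the inclusion $A_n(0,1) \cap \{T_0 \ge T_1\} \subseteq F$ fails and you would no longer have $\P(F) \ge C_3/(2n)$ for free. This is exactly what the paper's Claim~\ref{c:last} repairs: it performs a second, separate modification argument — lowering by one the passage time of the crossing edge $E_0$ — to show that the integer case $D(\gamma_0,\gamma_1)\in\Z$ still carries probability of order $1/n$. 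That extra step (or some substitute for it) is missing from your argument, so the proof is complete only for even $n$. The rest of your argument — the observation that the disjoint pair avoids $e$, the reflection symmetry giving $T_0 \ge T_1$ at cost $1/2$, the deterministic modification lemma, and the use of independence of $\tau(e)$ from $\omega^{[e]}$ — is correct and matches the paper's approach (the paper fixes a single deterministic $\Delta$ while you let $\tau(e)$ track $T_0^{[e]} - T_1^{[e]}$ configuration by configuration, but that is a cosmetic difference).
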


\begin{proof} 
Let $C_3>0$ be the constant given by Proposition \ref{p:disjoint-half-space}. 
We keep the notation $A_n(0,1)$ introduced in the proof of Proposition \ref{p:disjoint-half-space}.
By Proposition \ref{p:disjoint-half-space}, for all $n$ large enough, we thus have
\[
\P[A_n(0,1)] \ge \frac{C_3}n.
\]
Choose in an arbitrary deterministic way two disjoint geodesics $\gamma_0 \in \Gamma_0$ and $\gamma_1 \in \Gamma_1$.
Write $D(\gamma_0,\gamma_1) = \tau(\gamma_0)-\tau(\gamma_1)$.
Note that by \eqref{e:support-mu-0S} when $n$ is even $D(\gamma_0,\gamma_1)$ is necessarily an integer, but when $n$ is odd $D(\gamma_0,\gamma_1)$ can be a half-integer. 
This is due to our definition of point to $H_n$ geodesic times when $n$ is odd.
\begin{claim}\label{c:last} Reducing $C_3$ if necessary, we have, for all $n$ large enough,
\[
\P[A_n(0,1) \text{ and } D(\gamma_0,\gamma_1) \in \Z] \ge \frac{C_3}n.
\]
\end{claim}
The claim is straightforward when $n$ is even as $D(\gamma_0,\gamma_1)$ is always an integer.
In the odd case, the proof relies on a modification argument.
We will provide this argument below and, for now, assume the claim to be true.

By the triangle inequality we have
\[
| D(\gamma_0,\gamma_1) |\le \tau(0,e_2).
\]
Thus, by Claim \ref{c:last} and \eqref{e:support-mu-0S}, there exists $C_4>0$ and $\Delta\in \{-S,\dots,S\}$ such that
\begin{equation}\label{e:csq:p:disjoint-half-space}
\P[A_n(0,1) \text{ and } D(\gamma_0,\gamma_1)=\Delta] \ge \frac{C_4}{n}.
\end{equation}
In the sequel, we assume the event $A_n(0,1)\cap\{D(\gamma_0,\gamma_1)=\Delta\}$ occurs.
By symmetry,
\[
\P[A_n(0,1) \text{ and } D(\gamma_0,\gamma_1)=\Delta]=
\P[A_n(0,1) \text{ and } D(\gamma_0,\gamma_1)=-\Delta].
\]
Hence, we can assume $\Delta\ge 0$.
We define a new environment $(\overline\tau(e))_e$ by setting
\[
\overline\tau(e) = \begin{cases} \tau(e) & \text{ if } e \neq \{0,e_2\}, \\ \Delta & \text{ if } e  = \{0,e_2\}. \end{cases}
\]
We use the notations $\overline\tau, \overline T, \overline \Gamma$ and so on to denote quantities associated with these new edge times.
Distinguishing whether the path uses the edge $\{0,e_2\}$ or not, we get
\[
\overline T(0,H_n) \ge \min\big(T(0,H_n) , \Delta  + T(e_2,H_n) \big) = \min\big(\tau(\gamma_0), \Delta+\tau(\gamma_1)\big) = \tau(\gamma_0).
\]
Denote by $\gamma'_0$ the concatenation of the edge $(0,e_2)$ and of the path $\gamma_1$. 
We also have
\[
\overline \tau(\gamma_0) = \tau(\gamma_0) \text{ and } \overline\tau(\gamma'_0)=\Delta + \tau(\gamma_1) =\tau(\gamma_0).
\]
Therefore $\gamma_0$ and $\gamma'_0$ are two geodesics from $0$ to $H_n$ in the environment $(\overline\tau(e))_e$.
Moreover, by construction, $\gamma_0 \cap \gamma'_0 = \{0\}$.
This proves
\begin{align}\label{e:inclusion-modif}
A_n(0,1)& \cap \{D(\gamma_0,\gamma_1)=\Delta\}\nonumber\\
&\subset \big\{\text{there exist } \gamma_0, \gamma'_0 \in \Gamma_0 \text{ such that } \gamma_0 \cap \gamma'_0 = \{0\} \text{ in the environment } (\overline\tau(e))_e\big\}.
\end{align}
Finally, note that the edge times $(\overline\tau(e))_e$ can be obtained as follows: first resample independently $\tau(0,e_2)$ and then assume $\tau(0,e_2)=\Delta$.
Therefore, using \eqref{e:csq:p:disjoint-half-space} and \eqref{e:inclusion-modif} in the last step, we get
\begin{align*}
\P[ & \text{there exists } \gamma_0, \gamma'_0 \in \Gamma_0 \text{ such that } \gamma_0 \cap \gamma'_0 = \{0\}] \\
& \ge \P[\tau = \Delta]\P[\text{there exists } \gamma_0, \gamma'_0 \in \Gamma_0 \text{ such that } \gamma_0 \cap \gamma'_0 = \{0\} \text{ in the environment } (\overline\tau(e))_e] \\
& \ge \P[\tau = \Delta] \frac{C_4}{n}.
\end{align*}
Thanks to \eqref{e:support-mu-0S} this proves the result.

It remains to prove the claim.

\begin{proof}[Proof of Claim \ref{c:last}]
Note that under $A_n(0,1)\cap\{D(\gamma_0,\gamma_1)\not\in\Z\}$ (and thus $n$ odd), one of the geodesic times $\tau(\gamma_0)$ or $\tau(\gamma_1)$ is a half-integer and the other one is an integer.
By symmetry, 
\begin{equation}\label{e:Delta_not_integer}
 \P[A_n(0,1),\; D(\gamma_0,\gamma_1)\not\in\Z,\text{ and }\tau(\gamma_0)\not\in\N]=\frac12\P[A_n(0,1) \text{ and } D(\gamma_0,\gamma_1)\not\in\Z],
\end{equation}
where $\N$ denotes the set of non-negative integers.
We denote by $E_0$ the (random) edge crossing $H_n$ through the end-point of $\gamma_0$, that is $E_0$ is the edge $\{a,b\}$, $a,b\in\Z^d$ such that the end-point of $\gamma_0$ is $\frac12(a+b)\in H_n$. Note that, since $\tau(E_0)/2$ is the only possible half-integer time that is taking into account in $\tau(\gamma_0)$, we have that $\tau(\gamma_0)\not\in\N$ if and only if $\tau(E_0)$ is odd.
Assume the event 
\[
A_n(0,1)\cap\{D(\gamma_0,\gamma_1)\not\in\Z\}\cap\{E_0=e_0\}\cap\{\tau(e_0)=i_0\}
\]
occurs, where $e_0$ is some edge intersecting $H_n$ and $i_0$ is an odd integer in $\{1,\ldots,S\}$. 
We define a new environment $(\tau'(e))_{e}$ by setting
\[
\tau'(e) = \begin{cases} \tau(e) & \text{ if } e \neq e_0, \\ i_0-1 & \text{ if } e  = e_0, \end{cases}
\]
and we use the notation $\Gamma'(\cdot\,,\cdot)$ for sets of geodesics in this new environment.
We have 
\[
\tau'(\gamma_0)=\tau(\gamma_0)-\frac{\tau(e_0)}{2}+\frac{\tau'(e_0)}{2}=\tau(\gamma_0)-\frac12 \in \N. 
\]
Further, since the time of the edge $e_0$ is the only time that was decreased in this new environment, $\gamma_0\in\Gamma'(0,H_n)$ and any geodesic in $\Gamma'(0,H_n)$ has the same end-point as $\gamma_0$. 
Moreover, $\gamma_1$ also remains a geodesic from $e_2$ to $H_n$ in the environment $(\tau'(e))$. Indeed, if a path $\pi$ from $e_2$ to $H_n$ does not use the edge $e_0$, then $\tau'(\pi)=\tau(\pi)\ge \tau(\gamma_1)=\tau'(\gamma_1)$. If it uses the edge $e_0$, then $\tau(\pi)\in \frac12+\N$ and thus $\tau(\pi)\ge \tau(\gamma_1)+\frac12$ since $\tau(\gamma_1)\in\N$.
Hence $\tau'(\pi)=\tau(\pi)-\frac12\ge \tau(\gamma_1)=\tau'(\gamma_1)$.

We have shown that the event $A_n(0,1)$ still holds in the environment $(\tau'(e))$. If we denote by $\gamma_0'\in\Gamma'(0,H_n)$ and $\gamma_1'\in\Gamma'(e_2,H_n)$ the two disjoint geodesics chosen with the same deterministic rule as before and by $E'_0$ the edge containing the end-point of $\gamma_0'$, we have $E'_0=e_0$ and 
\[
D'(\gamma_0',\gamma_1'):=\tau'(\gamma_0')-\tau'(\gamma_1')=\tau(\gamma_0)-\frac12-\tau(\gamma_1)=D(\gamma_0,\gamma_1)-\frac12\in\Z.
\]
By reasoning as above, this proves that
\begin{align*}
 \P[ A_n(0,1),\; & D(\gamma_0,\gamma_1) \in\Z,\; E_0=e_0, \text{ and }\tau(e_0)=i_0-1] \\
&\ge  \P[\tau=i_0-1]\P[A_n(0,1),\;D(\gamma_0,\gamma_1)\not\in\Z,\;E_0=e_0,\text{ and }\tau(e_0)=i_0].
\end{align*}
Summing over all edges $e_0$ that intersect $H_n$ and all odd integers $i_0\in\{1,\ldots,S\}$, and using \eqref{e:support-mu-0S} and \eqref{e:Delta_not_integer}, we obtain a positive constant $C_5$ such that 
\begin{align*}
 \P[ A_n(0,1)\text{ and }D(\gamma_0,\gamma_1)\in\Z]
 &\ge C_5\P[A_n(0,1)\text{ and }D(\gamma_0,\gamma_1)\not\in\Z \text{ and } \tau(\gamma_0)\not\in\N]\\
 &= \frac{C_5}{2}\P[A_n(0,1)\text{ and }D(\gamma_0,\gamma_1)\not\in\Z].
\end{align*}
Reorganizing the above inequality and then using Proposition~\ref{p:disjoint-half-space} we get, for $n$ large enough,
\[
  \P[ A_n(0,1)\text{ and }D(\gamma_0,\gamma_1)\in\Z]\ge\frac{C_5}{2+C_5}\P[A_n(0,1)]\ge \frac{C_5}{2+C_5}\frac{C_3}{n}.
\]
This proves the claim.
\end{proof}
This ends the proof of the proposition.
\end{proof}

Finally, let us state the following immediate consequence of Cauchy-Schwarz inequality.

\begin{lemma} \label{l:cauchy-schwarz} Let $X$ be a random variable with value in a finite set $\cX$ of cardinality $N$. Then
\[
\sum_{x \in \cX} \P[X=x]^2 \ge \frac 1 N.
\]
\end{lemma}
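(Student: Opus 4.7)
The plan is a one-line application of the Cauchy--Schwarz inequality to the pair of vectors $(1)_{x \in \cX}$ and $(\P[X=x])_{x \in \cX}$ in $\R^N$. Indeed, I would start from the identity $\sum_{x \in \cX} \P[X=x] = 1$ and rewrite it as $\sum_{x \in \cX} 1 \cdot \P[X=x]$. The Cauchy--Schwarz inequality then yields
\[
1 = \sum_{x \in \cX} 1 \cdot \P[X=x] \le \left(\sum_{x \in \cX} 1^2 \right)^{1/2} \left(\sum_{x \in \cX} \P[X=x]^2\right)^{1/2} = \sqrt{N} \cdot \left(\sum_{x \in \cX} \P[X=x]^2\right)^{1/2}.
\]
Squaring and dividing by $N$ gives the desired bound $\sum_{x \in \cX} \P[X=x]^2 \ge 1/N$.

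There is no real obstacle here since the statement is exactly the case of equality/inequality in Jensen (or equivalently, the fact that among probability vectors on $N$ points the uniform distribution minimizes the $\ell^2$ norm). One could alternatively write the proof via Jensen's inequality applied to $t \mapsto t^2$ with the uniform measure on $\cX$, but the Cauchy--Schwarz route matches the wording of the lemma and is the most direct.
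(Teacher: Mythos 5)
Your proof is correct and is essentially identical to the paper's: the paper also applies Cauchy--Schwarz to $1=\sum_{x\in\cX}\P[X=x]$, writing the squared form $1\le N\sum_{x\in\cX}\P[X=x]^2$ directly rather than passing through the square roots, but that is the same computation.
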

\begin{proof}
By Cauchy-Schwarz inequality, $1 = \sum_{x \in \cX} \P[X=x] \le N \sum_{x \in \cX} \P[X=x]^2$. 
\end{proof}

\subsection{Proof of Item 1 of Theorem \ref{t:main-neighbor}}

\begin{proof}[Proof of Item 1 of Theorem \ref{t:main-neighbor}]
We fix $C_1, C_2$ as in Lemma \ref{l:controle-cardinal} and $C_3$ as in Proposition \ref{p:disjoint-half-space}.
Increasing $C_2$ if necessary, we can assume that $C_2$ is an integer.
Consider the good event
\[
G_n = \{ \text{there exists $\gamma_0 \in \Gamma_0$ and $\gamma_1 \in \Gamma_1$ such that $\gamma_0 \cap \gamma_1 = \emptyset$ and $|\gamma_0| \le C_2n$
and $|\gamma_1| \le C_2n$}\}.
\]
By Lemma \ref{l:controle-cardinal} and Proposition \ref{p:disjoint-half-space}, for all $n$ large enough,
\begin{equation}\label{e:csq-preliminary}
\P[G_n] \ge \frac{C_3}{2n}.
\end{equation}
When $G_n$ occurs, we choose in an arbitrary deterministic way (when several choices are possible) $\gamma_0$ and $\gamma_1$ as in the definition of the event $G_n$.
We denote by $U_0$ the endpoint of $\gamma_0$ and by $U_1$ the endpoint of $\gamma_1$. See Figure~\ref{f: Figure 1.}.
When $G_n$ does not occur, we can define $\gamma_0$, $\gamma_1$, $U_0$ and $U_1$ in an arbitrary way: this is irrelevant.
For short write $U=(U_0,U_1)$.

Symmetrically, we consider the geodesics from $ne_1$ to $H_n$ and from $ne_1+e_2$ to $H_n$.
With these geodesics we define in the same way as before an event $\overline G_n$, geodesics $\overline \gamma_0$ and $\overline \gamma_1$ 
and random variables $\overline U_0, \overline U_1$ and $\overline U$.
In particular, geodesics $\overline \gamma_0$ and $\overline \gamma_1$ are chosen in a deterministic way which is symmetric to the one used for $\gamma_0$ and $\gamma_1$,
so that $U$ and $\overline U$ have the same distribution.

\begin{figure}
	\begin{center}
		\begin{tikzpicture}[scale=0.5]
			\draw[line width=2pt,red] (12,-4.5) -- (12,6); 
			\draw[line width=1pt] (0,1) .. controls +(1,-1) and +(-2,0) .. (3,2.5) .. controls +(2,0) and +(-1,0) .. (6.5,3.2) .. controls +(1,0) and +(-1,-1) .. (9.5,3.2) .. controls +(1,1) and +(-1,0.5) .. (12,4);
			\draw[line width=1pt] (0,0) .. controls +(1,-1) and +(-0.5,0) .. (3,-0.5) .. controls +(0.75,0) and +(-01,0) .. (4.5,-2) .. controls +(1.5,0) and +(-1,0.5).. (7.2,-1.7) .. controls +(0.5,-0.25) and +(-0.5,-0.5) .. (9,-2) .. controls +(0.25,0.25) and +(-0.5,-0.25) .. (10.5,-0.5) .. controls +(0.5,0.25) and +(-1,0) .. (12,-0.5);
			\draw[line width=1pt] (12,2) .. controls +(1,0.4) and +(-4,2) .. (18.5,3.5) .. controls +(2,-1) and +(-1,0) .. (24,1);
			\draw[line width=1pt] (12,-2.5) .. controls +(1,0) and +(-1,0) .. (15,-0.5) .. controls +(1,0) and +(-1,0) .. (17.5,-1.1) .. controls +(1,0) and +(-1,-1) .. (20,0.5) .. controls +(1,1) and +(-1,0) .. (24,0);
			\draw (0,0) node {$\bullet$};
			\draw (0,1) node {$\bullet$};
			\draw (12,4) node {$\bullet$};
			\draw (12,2) node {$\bullet$};
			\draw (12,-0.5) node {$\bullet$};
			\draw (12,-2.5) node {$\bullet$};
			\draw (24,1) node {$\bullet$};
			\draw (24,0) node {$\bullet$};
			\draw (0,0) node[left] {$0$};
			\draw (0,1) node[left] {$e_2$};
			\draw (12,4) node[below left] {$U_1$};
			\draw (12,2) node[below right] {$\overline{U}_1$};
			\draw (12,-0.5) node[below left] {$U_0$};
			\draw (12,-2.5) node[below right] {$\overline{U}_0$};
			\draw (24,1) node[right] {$ne_1+e_2$};
			\draw (24,0) node[right] {$ne_1$};
			\draw[red] (12,-4.5) node[right] {$H_n$}; 
			\draw (4.5,-2) node[below] {$\gamma_0$};
			\draw (4.5,2.7) node[above] {$\gamma_1$};
			\draw (18.5,-0.9) node[below] {$\overline{\gamma}_0$};
			\draw (18.5,3.5) node[above] {$\overline{\gamma}_1$};
		\end{tikzpicture}
		\caption{When the events $G_n$ (on the left) and $\overline{G}_n$ (on the right) occur, if further $U=\overline{U}$, then the paths $\gamma_0 \cup \overline{\gamma}_0$ and $\gamma_1 \cup \overline{\gamma}_1$ are disjoint geodesics.}\label{f: Figure 1.}
	\end{center}
\end{figure}
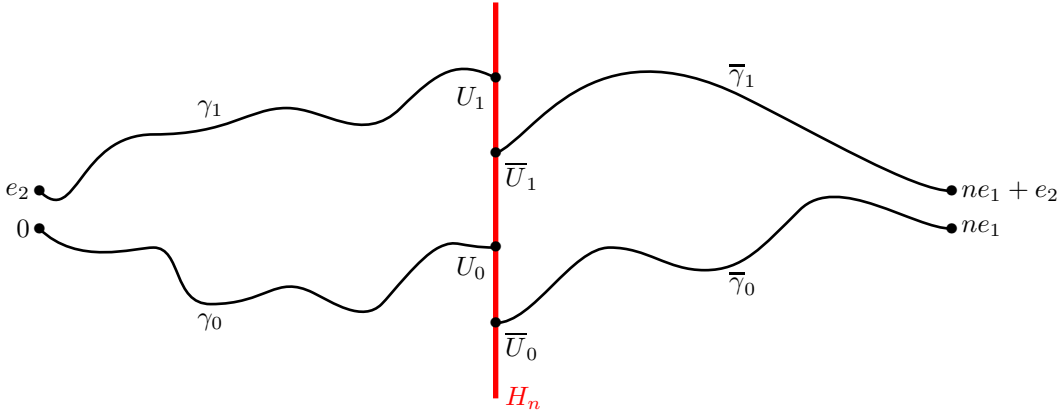

\begin{claim} \label{c:symmetrisation} The following inclusion holds:
\begin{align*}
G_n \cap \overline G_n  & \cap \{U=\overline U\} \nonumber \\
 & \subset \{\text{there exists $\gamma$ in $\Gamma(0,ne_1)$ and $\gamma'$ in $\Gamma(e_2,ne_1+e_2)$ such that $\gamma$ and $\gamma'$ are disjoint}\}.
\end{align*}
\end{claim}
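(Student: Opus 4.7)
The plan is to produce the required disjoint geodesics by gluing the two point-to-hyperplane geodesics on each side of $H_n$, using the matching hyperplane endpoints provided by $\{U=\overline U\}$.

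First I would set $\gamma := \gamma_0 \cup \overline{\gamma}_0$ and $\gamma' := \gamma_1 \cup \overline{\gamma}_1$. On the event $\{U_0=\overline U_0\}\cap\{U_1=\overline U_1\}$, each concatenation is a well-defined path in $\Z^d$: $\gamma$ runs from $0$ to $ne_1$ and $\gamma'$ runs from $e_2$ to $ne_1+e_2$. When $n$ is odd, the shared ``endpoint'' lies in the auxiliary vertex set $\{n/2\}\times\Z^{d-1}$, but in the original lattice it simply corresponds to traversing the $\Z^d$-edge straddling $H_n$, and the two half-times recombine into the original $\tau$ of that edge.

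Next I would verify that $\gamma$ is a geodesic from $0$ to $ne_1$. Any path $\pi$ from $0$ to $ne_1$ necessarily crosses $H_n$, and the symmetric half-edge convention yields
\[
\tau(\pi) = \tau(\pi_{\mathrm{left}}) + \tau(\pi_{\mathrm{right}}),
\]
where $\pi_{\mathrm{left}}$ and $\pi_{\mathrm{right}}$ denote the two sub-paths joining $0$, respectively $ne_1$, to $H_n$. Each sub-piece is bounded below by the corresponding point-to-hyperplane infimum, so
\[
\tau(\pi) \ge T(0,H_n) + T(ne_1,H_n) = \tau(\gamma_0) + \tau(\overline{\gamma}_0) = \tau(\gamma).
\]
Hence $\gamma \in \Gamma(0,ne_1)$, and the same argument (shifted by $e_2$) gives $\gamma' \in \Gamma(e_2,ne_1+e_2)$.

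Finally I would argue disjointness. In the half-space $\{x_1 < n/2\}$, the vertices of $\gamma$ are those of $\gamma_0$ and the vertices of $\gamma'$ are those of $\gamma_1$, which are disjoint by $G_n$. Symmetrically, in $\{x_1 > n/2\}$, disjointness follows from $\overline G_n$. On $H_n$ itself, $\gamma$ meets the hyperplane only at $U_0$ and $\gamma'$ only at $U_1$; these differ because otherwise $\gamma_0$ and $\gamma_1$ would share their common endpoint, contradicting $\gamma_0 \cap \gamma_1 = \emptyset$. Thus $\gamma \cap \gamma' = \emptyset$, giving the desired inclusion. The only mild technicality is the odd-$n$ bookkeeping, but that is precisely what the symmetric half-time convention from the beginning of Section~2.1 is designed to handle; beyond that, the argument is forced by the definition of point-to-hyperplane geodesics and I expect no real obstacle.
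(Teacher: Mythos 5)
Your argument is correct and follows the same route as the paper: concatenate the point-to-hyperplane geodesics on each side of $H_n$ using the matching endpoints provided by $\{U=\overline U\}$, invoke the inequality $T(0,ne_1)\ge T(0,H_n)+T(ne_1,H_n)$ to conclude the concatenations are geodesics, and then observe disjointness. You spell out the disjointness check (half-space by half-space, plus $U_0\neq U_1$ since $\gamma_0\cap\gamma_1=\emptyset$) a bit more explicitly than the paper, but the underlying argument is the same.
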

\begin{proof}[Proof of Claim \ref{c:symmetrisation}]
Assume that the event on the left-hand side occurs.
The inequality $T(0,ne_1) \ge T(0,H_n) + T(ne_1,H_n)$ holds (this does not rely on the previous assumption).
But as $U_0=\overline U_0$, one can concatenate $\gamma_0$ and $\overline \gamma_0$ and this produces a path $\widetilde\gamma_0$ from $0$ to $ne_1$ with total passage time $T(0,H_n) + T(ne_1,H_n)$.
Therefore $T(0,ne_1) = T(0,H_n) + T(ne_1,H_n)$ and $\widetilde\gamma_0$ is a geodesic from $0$ to $ne_1$.
In a similar fashion, concatenating $\gamma_1$ and $\overline \gamma_1$ we get a path $\widetilde\gamma_1$ which is a geodesic from $e_2$ to $e_2+ne_1$.
As $\widetilde\gamma_0$ and $\widetilde\gamma_1$ are disjoint, the claim is proven.
\end{proof}

The proof of Item 1 of Theorem \ref{t:main-neighbor} is thus reduced to lower bounding $\P[G_n \cap \overline G_n  \cap \{U=\overline U\}]$. Consider the following subset of $\frac12\Z\times\Z^{d-1}$:
\[
\Lambda_n = \left\{\frac{n}{2}\right\} \times \{-C_2 n ,\dots , C_2  n\}^{d-1}.
\]

\begin{claim}\label{c:curry} For all $n \ge 1$,
\[
\P[G_n \cap \overline G_n  \cap \{U=\overline U\}] \ge  \P[G_n]^2 \frac{1}{|\Lambda_n|^2}.
\]
\end{claim}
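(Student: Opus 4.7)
My plan is to combine a reflection symmetry across $H_n$ with Cauchy--Schwarz, exploiting the uniform length bound $C_2 n$ on the relevant geodesics.

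First I would observe that on $G_n$, $U = (U_0, U_1) \in \Lambda_n \times \Lambda_n$: indeed, $|\gamma_i|_e \le C_2 n$ forces every vertex of $\gamma_i$ to have all coordinates bounded by $C_2 n$ in absolute value, and $U_i \in H_n$ means its first coordinate equals $n/2$, so $U_i \in \Lambda_n$. The same bound gives $\overline U \in \Lambda_n \times \Lambda_n$ on $\overline G_n$.

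Next, let $\cF$ denote the $\sigma$-algebra generated by the weights of all edges meeting $H_n$ (edges with both endpoints on $H_n$ when $n$ is even; the crossing edges, whose two halves share weight $\tau(e)/2$, when $n$ is odd). Conditionally on $\cF$, the pair $(G_n, U)$ is a function of the weights strictly to the left of $H_n$, while $(\overline G_n, \overline U)$ is a function of those strictly to the right; by the i.i.d.\ assumption these two families are independent. The reflection across $H_n$ sends $0 \mapsto ne_1$, $e_2 \mapsto ne_1 + e_2$, fixes $H_n$ pointwise and preserves $\cF$; together with the i.i.d.\ structure this yields that $(G_n, U)$ and $(\overline G_n, \overline U)$ have the same conditional distribution given $\cF$.

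Combining the above,
\begin{align*}
\P[G_n \cap \overline G_n \cap \{U = \overline U\}]
& = \E\left[\sum_{u \in \Lambda_n \times \Lambda_n} \P[G_n, U = u \mid \cF]\,\P[\overline G_n, \overline U = u \mid \cF]\right] \\
& = \E\left[\sum_{u \in \Lambda_n \times \Lambda_n} \P[G_n, U = u \mid \cF]^2\right].
\end{align*}
Applying Lemma~\ref{l:cauchy-schwarz} to the conditional probability measure $u \mapsto \P[U = u \mid G_n, \cF]$, which is supported on $\Lambda_n \times \Lambda_n$, and multiplying through by $\P[G_n \mid \cF]^2$ gives
\[
\sum_{u \in \Lambda_n \times \Lambda_n} \P[G_n, U = u \mid \cF]^2 \ge \frac{\P[G_n \mid \cF]^2}{|\Lambda_n|^2}.
\]
Taking expectation and using $\E[X^2] \ge (\E X)^2$ then yields $\P[G_n \cap \overline G_n \cap \{U = \overline U\}] \ge \P[G_n]^2 / |\Lambda_n|^2$, as claimed.

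The main obstacle I anticipate is the rigorous verification of conditional independence in the $n$ odd case: there the two halves of an $H_n$-crossing edge carry the same deterministic weight, so left and right are not independent in general. This is resolved cleanly by placing the crossing-edge weights into $\cF$, so that on each side of $H_n$ what remains is a genuinely independent family and the reflection argument applies fiber by fiber.
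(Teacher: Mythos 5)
Your proof is correct and takes essentially the same approach as the paper's: conditional independence of the two sides of $H_n$, reflection symmetry, Lemma~\ref{l:cauchy-schwarz}, and Jensen's inequality. The only difference is cosmetic --- you condition on the crossing-edge $\sigma$-algebra $\cF$ uniformly in $n$, whereas the paper treats $n$ even (unconditional independence, since edges inside $H_n$ are irrelevant) and $n$ odd (conditional on $\cF_n$) as separate cases; your unified version is a slight streamlining of the same argument.
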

\begin{proof}[Proof of Claim \ref{c:curry}]
The proof is slightly different here depending on whether $n$ is odd or even.
Let us first consider the case $n$ even. 
In this case there is independence between what concerns geodesics on either side of $H_n$.
(Note that when considering geodesics to $H_n$ the value of edges inside $H_n$ are irrelevant.)
When $G_n$ occurs, the random variables $U_0, U_1, \overline U_0$ and $\overline U_1$ take values in the set $\Lambda_n$ defined above.
Thus
\begin{align*}
\P[G_n \cap \overline G_n  \cap \{U=\overline U\}] 
& = \sum_{u_0, u_1 \in \Lambda_n}  \P[G_n \cap \{U=(u_0,u_1)\} \cap \overline G_n  \cap \{\overline U=(u_0,u_1)\}]. 
\end{align*}
Using the above mentioned independence alongside with symmetry, we get
\begin{align*}
\P[G_n \cap \overline G_n  \cap \{U=\overline U\}] 
& = \sum_{u_0, u_1 \in \Lambda_n}  \P[G_n \cap \{U=(u_0,u_1)\}]^2 \\
& = \P[G_n]^2 \sum_{u_0, u_1 \in \Lambda_n} \P[U=(u_0,u_1) | G_n]^2 \\
& \ge  \P[G_n]^2 \frac{1}{|\Lambda_n|^2}
\end{align*}
where we used Lemma \ref{l:cauchy-schwarz} in the last step.

Let us now consider the case $n$ odd. In this case we need to take care of the travel times of edges which intersect $H_n$.
Let us denote by $\cF_n$ the sigma-field generated by these random variables.
Denote by $\P_{\cF_n}$ the conditional probability given $\cF_n$.
Under $\P_{\cF_n}$, there is independence between what concerns geodesics on either side of $H_n$.
We thus have
\begin{align*}
\P[G_n \cap \overline G_n  \cap \{U=\overline U\}] 
& = \sum_{u_0, u_1 \in \Lambda_n}  \P[G_n \cap \{U=(u_0,u_1)\} \cap \overline G_n  \cap \{\overline U=(u_0,u_1)\}]  \\
& = \sum_{u_0, u_1 \in \Lambda_n}  \E\big[\P_{\cF_n}[G_n \cap \{U=(u_0,u_1)\} \cap \overline G_n  \cap \{\overline U=(u_0,u_1)\}]\big] \\
& = \sum_{u_0, u_1 \in \Lambda_n} \E\left[ [\P_{\cF_n}[G_n \cap \{U=(u_0,u_1)\}]^2\right] \\
& \ge \sum_{u_0, u_1 \in \Lambda_n}\P\left[G_n \cap \{U=(u_0,u_1)\}\right]^2
\end{align*}
where we used Jensen inequality in the last step, and we can conclude as in the case $n$ even above.
\end{proof}

We can now conclude the proof, using successively Claim \ref{c:symmetrisation}, Claim \ref{c:curry}, \eqref{e:csq-preliminary} and the definition of $\Lambda_n$:
\begin{align*}
 \P[\text{there}&\text{ exists $\gamma$ in $\Gamma(0,ne_1)$ and $\gamma'$ in $\Gamma(e_2,ne_1+e_2)$ such that $\gamma$ and $\gamma'$ are disjoint}] \\
& \ge \P[G_n \cap \overline G_n  \cap \{U=\overline U\}] \\
& \ge \P[G_n]^2 \frac{1}{|\Lambda_n|^2} \\
& \ge  \frac{C_3^2}{4(2C_2+1)^{2(d-1)} n^{2d}}.
\end{align*}
This ends the proof.
\end{proof}

\subsection{Proof of Item 1 of Theorem \ref{t:main-same}}

\begin{proof}[Proof of Item 1 of Theorem \ref{t:main-same}]
The proof is the same as in the previous section, using Proposition \ref{p:disjoint-half-space-same} instead of Proposition \ref{p:disjoint-half-space}. Here we consider the set 
\[
 G_n=\{\text{there exist } \gamma_0, \gamma_1 \in \Gamma_0 \text{ such that } \gamma_0 \cap \gamma_1 = \{0\},\; |\gamma_0|\le C_2n, \text{ and }|\gamma_1|\le C_2n\}.
\]
By Lemma \ref{l:controle-cardinal} and Proposition \ref{p:disjoint-half-space-same}, for $n$ large enough, $\P[G_n]\ge \frac{C_3'}{2n}$. Defining $\overline G_n$, $U$, and $\overline U$ as before, the same proof as for Claim \ref{c:symmetrisation} gives
\[
G_n \cap \overline G_n  \cap \{U=\overline U\}
 \subset \{\text{there exists $\gamma$ and $\gamma'$ in $\Gamma(0,ne_1)$ such that $\gamma\cap\gamma'=\{0,ne_1\}$}\},
\]
and we conclude as in Claim \ref{c:curry}. We skip the details.
\end{proof}

\section{Proof of Item 2 of Theorems \ref{t:main-neighbor} and \ref{t:main-same}}

\subsection{Coexistence in competition model of first-passage percolation}
\label{s:coex-intro}

\paragraph{Introduction.} Assume in this paragraph that the distribution of $\tau$ is absolutely continuous.
In this setting, for any $a,b \in \Z^d$ there exists a unique geodesic between $a$ and $b$ which we denote by $\gamma(a,b)$.
Define the random sets
\[
V = \{u \in \Z^d : T(0,u)<T(e_2,u)\} \text{ and } V' = \{u \in \Z^d : T(e_2,u)<T(0,u)\}.
\]
They form a partition of $\Z^d$.
Consider the coexistence event
\[
\text{coexistence} = \{\text{$V$ and $V'$ are infinite}\}.
\]
The model was introduced by Häggström and Pemantle in \cite{Haggstrom-Pemantle}. 
The name "coexistence" comes from the fact that $V$ and $V'$ can be interpreted as the sets of vertices ultimately infected by two competing infections.
The coexistence event is very closely related to the number of ends of the tree of geodesics\footnote{The tree of geodesics rooted at $0$ is the union over all $x \in \Z^d$ of the edges of $\gamma(0,x)$.}.
We refer to \cite{50} and \cite{Deijfen-Haggstrom-pleasures-pains} for review on this topic 
and to \cite{Ahlberg-review} for recent results on the link between coexistence and ends of the tree of geodesics.

Recall we assume that the law of $\tau$ is absolutely continuous.
Assume further that its support equals $[0,+\infty)$ and that it satisfies some mild integrability conditions.
Then,
\begin{equation}\label{e:coexistence_positive}
\P[\text{coexistence}]>0.
\end{equation}
The first proof appeared in \cite{Haggstrom-Pemantle} when $d=2$ and $\tau$ has an exponential distribution.
It was then extended independently to a general stationary setting by \cite{Hoffman1} and \cite{Garet-Marchand}.

\paragraph{General independent setting.} We get back to our usual setting: edge times are independent with distribution satisfying \eqref{e:running_assumption}.
In this setting, using ideas of \cite{Garet-Marchand, Hoffman1, Hoffman2} in the framework of regularized passage times developed by Cerf and Théret \cite{Cerf-Theret}
one can actually prove the following result
in which we get rid of any assumption except our running assumption \eqref{e:running_assumption}.
For all $x, y, z \in \Z^d$, we define
\[
B_z(x,y) = T(x,z) - T(y,z)
\]
and we set
\[
\overline\ell^+ = \limsup_{k \to \infty} B_{ke_2}(0,e_2) \text{ and } \underline\ell^- = \liminf_{k \to -\infty} B_{ke_2}(0,e_2).
\]
As $|B_z(x,y)| \le T(x,y)$ for any $x,y,z$, the previous quantities belong to $[-\tau(0,e_2), \tau(0,e_2)]$.

\begin{theorem} \label{t:coex-general} Under \eqref{e:running_assumption}, the probability $\P\big[\underline\ell^-<\overline\ell^+\big]$ is positive.
\end{theorem}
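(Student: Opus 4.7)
The strategy is to reformulate $\{\overline\ell^+ > \underline\ell^-\}$ as a directional coexistence event for the handicapped two-type competition model with sources $0$ and $e_2$, and to prove its positivity via the general coexistence arguments of \cite{Haggstrom-Pemantle,Garet-Marchand,Hoffman1,Hoffman2}, suitably adapted to the present minimal assumption \eqref{e:running_assumption} through the regularization machinery of Cerf and Théret \cite{Cerf-Theret}.

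First, decompose
\[
\{\overline\ell^+ > \underline\ell^-\} = \bigcup_{c \in \Q} \Big( \{\overline\ell^+ > c\} \cap \{\underline\ell^- < c\} \Big).
\]
For each rational $c$, set $V_c^+ = \{u \in \Z^d : B_u(0,e_2) > c\}$ and $V_c^- = \{u \in \Z^d : B_u(0,e_2) < c\}$. The $c$-th intersection above says exactly that $V_c^+$ contains points $ke_2$ with $k > 0$ arbitrarily large, while $V_c^-$ contains points $ke_2$ with $k < 0$ arbitrarily negative: a directional form of coexistence between the two competitors started at $e_2$ and $0$ with handicap $c$. The reflection $\sigma : (x_1,x_2,x_3,\ldots,x_d) \mapsto (x_1, 1-x_2, x_3, \ldots, x_d)$ preserves the law of $(\tau(e))_e$ and swaps $0$ and $e_2$, which gives the distributional identity $(\overline\ell^+, \underline\ell^-) \stackrel{d}{=} (-\underline\ell^-, -\overline\ell^+)$ and an inherent $c \leftrightarrow -c$ symmetry in the reformulated event; one may therefore restrict attention to $c \ge 0$.

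It remains to show that this directional coexistence occurs with positive probability for some rational $c$. I would first regularize $\tau$ in the style of \cite{Cerf-Theret}, replacing it by a mildly perturbed $\tau^{\varepsilon}$ which is absolutely continuous with the integrability needed for the coexistence arguments of \cite{Garet-Marchand, Hoffman1, Hoffman2} to apply directly. On the regularized model, run the Hoffman / Garet-Marchand scheme: assuming by contradiction that directional coexistence fails for every rational $c$, use translation invariance along $e_2$ together with a finite-energy / modification argument (in the spirit of those used earlier in this article) to derive a contradiction, thus obtaining directional coexistence for $\tau^{\varepsilon}$ with some positive probability $p_{\varepsilon}$. Finally, pass to the limit $\varepsilon \to 0$, using continuity of $B^{\varepsilon}_{ke_2}(0,e_2)$ in $\varepsilon$ for each fixed $k$ together with a diagonal/compactness argument, to transfer a uniform positivity $p_{\varepsilon} \ge p > 0$ to the unperturbed times.

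The main obstacle is this final transfer step. The quantities $\overline\ell^+$ and $\underline\ell^-$ are defined via $\limsup$/$\liminf$ as $k \to \pm\infty$, so turning coexistence with handicap $c$ for $\tau^{\varepsilon}$ into the strict inequality $\overline\ell^+ > \underline\ell^-$ for $\tau$ requires uniform control in $\varepsilon$ along the whole $e_2$-axis; zero-weight edges allowed by \eqref{e:running_assumption} and the possibly non-unique geodesics further complicate the picture. Calibrating the regularization to be mild enough for $\overline\ell^{\pm}$ to survive the limit, yet strong enough to trigger the existing coexistence theorems, is where the bulk of the technical work would lie.
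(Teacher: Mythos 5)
Your proposal takes a genuinely different route, and unfortunately the route has a gap you yourself identify but do not resolve, so as written it does not constitute a proof. The central problem is the last transfer step. You plan to perturb $\tau$ into an absolutely continuous $\tau^{\varepsilon}$, invoke the Garet--Marchand/Hoffman coexistence machinery for $\tau^{\varepsilon}$, and then let $\varepsilon\to0$. But the quantities $\overline\ell^{+}$ and $\underline\ell^{-}$ are $\limsup$'s and $\liminf$'s over $k\to\pm\infty$, so a pointwise-in-$k$ continuity of $B^{\varepsilon}_{ke_2}(0,e_2)$ in $\varepsilon$ gives no control on the $\limsup/\liminf$: the ``first $k$ where the $\limsup$ is essentially realized'' is not locally bounded, and nothing prevents the coexistence events for $\tau^{\varepsilon}$ from being asymptotically supported on configurations that lose coexistence in the limit. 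A ``diagonal/compactness argument'' is named but not supplied; this is precisely where the difficulty is concentrated, and no tool you cite addresses it. You also misread the Cerf--Th\'eret framework: their regularization does not perturb the law of $\tau$ at all, and in particular does not make it absolutely continuous. They keep the same edge weights and define $T^{M}(x,y)=T(x^{M},y^{M})$ where $x^{M}$ is a nearest vertex in the infinite cluster of $M$-open edges; the gain is integrability and a shape theorem, with a cheap transfer from $T^{M}$ back to $T$ on the high-probability event that $0,x,kx$ all lie in $\cC^{M}$ (so that $B^{M}_{kx}=B_{kx}$). No $\varepsilon\to0$ limit is ever taken.

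There is a further, more structural issue. Even granting the reduction to a directional coexistence statement with a handicap $c$, you offer only a heuristic (``contradiction plus translation invariance plus finite energy'') in place of an argument. The paper's proof is direct, not by contradiction, and has a concrete mechanism that your sketch lacks: an averaging step using $L^{1}$ convergence of $T^{M}(0,Nx)/N$ to $\mu(x)$ and the telescoping identity $\sum_{k}B^{M}_{kx}(0,x)=T^{M}(0,Nx)$ to show $\E[B^{M}_{kx}(0,x)]\ge\mu(x)(1-\epsilon)$ for most $k$; a concentration step upgrading expectation close to the maximum $T^{M}(0,x)$ into probability close to one; the transfer $B^{M}\to B$ by the event $\{0^{M}=0,x^{M}=x,(kx)^{M}=kx\}$; and finally a decomposition $B_{ke_1}(0,Ae_1)=\sum_{a=0}^{A-1}B_{ke_1}(ae_1,(a+1)e_1)$ together with stationarity and a union bound to go from scale $A$ down to unit scale. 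None of these ingredients appear in your proposal, and they are exactly what makes the argument work under only \eqref{e:running_assumption} without any perturbation or limiting procedure.
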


We provide the proof in Section \ref{s:proof:coex-general}.
Our proof of Item 2 of Theorems \ref{t:main-neighbor} and \ref{t:main-same} relies on Theorem \ref{t:coex-general}.

Note that $\{\underline\ell^-<\overline\ell^+\}$ is not the coexistence event. There is however a close link as shown by the following inclusion:
\[
\{\underline\ell^-< 0 < \overline\ell^+\} \subset \text{coexistence}.
\]
Indeed, when the event on the left hand-side occurs, there exist an infinite number of $k \ge 1$ such that $-ke_2 \in V$ and an infinite number of $k \ge 1$ such that $ke_2 \in V'$.
With a modification argument on can, under appropriate assumptions on the distribution of $\tau$, prove $\P[\underline\ell^-< 0 < \overline\ell^+]>0$ using $\P\big[\underline\ell^-<\overline\ell^+\big]>0$.
See the proof of Theorem 2 in \cite{Hoffman2} where such a modification argument is performed.

\subsection{Proof of Item 2 of Theorem \ref{t:main-neighbor}}

\begin{proof}[Proof of Item 2 of Theorem \ref{t:main-neighbor}]
Set
\[
\ell = \frac{\overline\ell^++\underline\ell^-}2.
\]
Assume that the event $\{\underline\ell^-<\overline\ell^+\}$ occurs.
Then the sets
\[
\overline V = \{z \in \Z^d : B_z(0,e_2) \ge \ell\} \text{ and } \underline V = \{z \in \Z^d : B_z(0,e_2) < \ell\}
\]
are both infinite.
For any $z \in \underline V$ and any geodesic $\gamma \in \Gamma(0,z)$, $\gamma$ is included in $\underline V$.
Indeed, for any vertex $w$ in $\gamma$,
\begin{equation}\label{e:gamma_in_V}
T(0,w) + T(w,z) = T(0,z) < T(e_2,z) + \ell \le T(e_2,w) + T(w,z) + \ell
\end{equation}
and thus $T(0,w) < T(e_2,w) + \ell$ that is $w \in \underline V$.
A symmetric statement holds for geodesics from $e_2$ to points of $\overline V$.
As both sets are infinite, this yields that for any $n \ge 1$ there exists $x,x' \in S_n$ such that $x$ belongs to $\underline V$ and $x'$ belongs to $\overline V$.
As moreover the union of  $\underline V$ and $\overline V$ is $\Z^d$, we deduce the existence of $y,y' \in S_n$ such that $y \sim y'$, $y$ belongs to $\underline V$ 
and $y'$ belongs to $\overline V$.
Consider any geodesics $\gamma \in \Gamma(0,y)$ and $\gamma' \in \Gamma(e_2,y')$.
The first one is included in $\underline V$.
The second one is included in $\overline V$.
As $\underline V$ and $\overline V$ are disjoint, we have proven
\begin{align*}
& \{\underline\ell^-<\overline\ell^+\} \\
& \subset \{\text{there exist } y\sim y'\in S_n \text{ such that there exist } \gamma \in \Gamma(0,y) \text{ and } \gamma' \in \Gamma(e_2,y') \text{ satisfying } \gamma \cap \gamma' = \emptyset\}.
\end{align*}
By union bound we get 
\begin{align*}
& \P\big[\underline\ell^-<\overline\ell^+\big] \\
& \le \sum_{y,y' \in S_n : y \sim y'}\P\Big[\text{there exist } \gamma \in \Gamma(0,y) \text{ and } \gamma' \in \Gamma(e_2,y') \text{ satisfying } \gamma \cap \gamma' = \emptyset\Big] \\
& \le C n^{d-1} \max_{y,y' \in S_n : y \sim y'}\P\Big[\text{there exist } \gamma \in \Gamma(0,y) \text{ and } \gamma' \in \Gamma(e_2,y') \text{ satisfying } \gamma \cap \gamma' = \emptyset\Big]
\end{align*}
for some constant $C=C(d)>0$.
Item 2 of Theorem \ref{t:main-neighbor} follows from Theorem \ref{t:coex-general}.
\end{proof}

\subsection{Proof of Item 2 of Theorem \ref{t:main-same}}

In addition to our running assumption \eqref{e:running_assumption}, we assume \eqref{e:support-mu-0S}.

\begin{claim} \label{c:coex-K}
There exists an integer $K\in\{0,1,\ldots,S\}$ such that the probability of the event
\begin{equation} \label{e:coex-K}
\Big\{\text{the sets } \{z \in \Z^d : B_z(0,e_2) \ge K\} \text{ and } \{z \in \Z^d : B_z(0,e_2)<K\} \text{ are infinite}\Big\}
\end{equation}
is positive.
\end{claim}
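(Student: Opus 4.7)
The plan is to derive the claim from Theorem~\ref{t:coex-general} by combining three ingredients: integer-valuedness of the Busemann-type increments under \eqref{e:support-mu-0S}, a pigeonhole argument to fix a deterministic level, and a reflection symmetry to push that level into $\{0,1,\dots,S\}$.

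First I would observe that \eqref{e:support-mu-0S} forces every passage time $T(x,y)$ to be a non-negative integer, so $B_z(0,e_2)=T(0,z)-T(e_2,z)$ is integer-valued and lies in $[-S,S]$. In particular the sequence $(B_{ke_2}(0,e_2))_k$ takes values in a finite set, and hence $\overline\ell^+$ and $\underline\ell^-$ are themselves integers and are attained infinitely often along the respective one-sided subsequences. Consequently, on the event $\{\underline\ell^-<\overline\ell^+\}$, which has positive probability by Theorem~\ref{t:coex-general}, if one defines the random integer $K:=\overline\ell^+\in\{-S+1,\dots,S\}$, then $\{z\in\Z^d:B_z(0,e_2)\ge K\}$ contains $ke_2$ for infinitely many $k>0$, and $\{z\in\Z^d:B_z(0,e_2)<K\}$ contains $ke_2$ for infinitely many $k<0$ (the strict inequality uses $\underline\ell^-<K$ together with the fact that $\underline\ell^-$ is attained).

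By pigeonhole over the finitely many possible values of $K$, some deterministic integer $K_0\in\{-S+1,\dots,S\}$ realizes \eqref{e:coex-K} with positive probability. The remaining step is to upgrade $K_0$ to a value in $\{0,1,\dots,S\}$. For this I would use the graph automorphism $\phi$ of $\Z^d$ given by reflection across the hyperplane $\{x_2=1/2\}$: it swaps $0$ and $e_2$, and since edge times are i.i.d., the environment is invariant in distribution under $\phi$. A short computation then gives $(B_z(0,e_2))_{z\in\Z^d}\stackrel{d}{=}(-B_{\phi(z)}(0,e_2))_{z\in\Z^d}$. Using integer-valuedness, so that $\{B_w(0,e_2)\le -K_0\}=\{B_w(0,e_2)<1-K_0\}$ and $\{B_w(0,e_2)>-K_0\}=\{B_w(0,e_2)\ge 1-K_0\}$, one sees that \eqref{e:coex-K} with level $K_0$ has the same probability as \eqref{e:coex-K} with level $1-K_0$. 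At least one of $K_0$ and $1-K_0$ lies in $\{0,1,\dots,S\}$, which completes the proof.

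I do not anticipate a serious obstacle here: the whole argument rests on the simple point that integrality makes the limsup and liminf be actually attained, so a single random level $K=\overline\ell^+$ can simultaneously play the roles of upper threshold and strict separator. The only small care required is the inequality flip under the reflection, which works precisely because $B_z(0,e_2)$ is integer-valued.
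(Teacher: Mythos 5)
Your proof is correct and uses essentially the same ingredients as the paper: positivity from Theorem~\ref{t:coex-general}, a pigeonhole over the finitely many integer levels guaranteed by \eqref{e:support-mu-0S}, and the $0\leftrightarrow e_2$ reflection symmetry to force the level into $\{0,\dots,S\}$. The only difference is organizational --- the paper symmetrizes first (rewriting $\underline\ell^-$ as $-\limsup B_z(e_2,0)$) and then pigeonholes, whereas you pigeonhole on $\overline\ell^+$ first and then apply the reflection to replace $K_0$ by $1-K_0$ --- which amounts to the same argument.
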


\begin{proof}
This is an immediate consequence of Theorem~\ref{t:coex-general} and of the symmetry of the model.
Note
\begin{equation}\label{e:sym-limsup-liminf}
\liminf_{\|z\|\to\infty} B_z(0,e_2) =  - \limsup_{\|z\|\to\infty} B_z(e_2,0).
\end{equation}
By Theorem~\ref{t:coex-general} we thus get
\[
\P\left[\limsup_{\|z\|\to\infty} B_z(e_2,0) + \limsup_{\|z\|\to\infty} B_z(0,e_2) > 0\right]>0.
\]
On the event described above, one of the two $\limsup$ is positive.
By symmetry, we can therefore choose a non-negative integer $K$ such that
\[
\P\left[\limsup_{\|z\|\to\infty} B_z(e_2,0) + \limsup_{\|z\|\to\infty} B_z(0,e_2) > 0 \text{ and } \limsup_{\|z\|\to\infty} B_z(0,e_2) = K\right]>0.
\]
Using \eqref{e:sym-limsup-liminf} back we get that this new event is 
\[
\left\{\limsup_{\|z\|\to\infty} B_z(0,e_2) = K \text{ and } \liminf_{\|z\|\to\infty} B_z(0,e_2) < K\right\}.
\]
By \eqref{e:support-mu-0S} the claim follows.
\end{proof}

We fix such an integer $K$ for the rest of the proof. We define for all path $\pi$, $\tau_K(\pi)=K+\tau(\pi)$ and for all $x,y\in\Z^d$, $T_K(x,y)=K+T(x,y)$.

\begin{claim} \label{c:coex-K-yyprime-Delta}
There exists $C>0$ such that, for all $n \ge 1$, there exist $\Delta \in \{-S,\dots,S\}$ and $y,y' \in S_n$ satisfying $y \sim y'$ and
\[
\P\big[T(0,y) < T_K(e_2,y), \; T(0,y') \ge T_K(e_2,y'), \; T(0,y)=T_K(e_2,y')+\Delta 
\big] \ge \frac{C}{n^{d-1}}.
\]
\end{claim}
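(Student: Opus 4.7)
The plan is to deduce the claim from Claim \ref{c:coex-K} following the template of the proof of Item 2 of Theorem \ref{t:main-neighbor}, specialised to the integer level $K$ and with extra bookkeeping to control the offset $\Delta$. Fix $K$ as in Claim \ref{c:coex-K} and set
\[
\overline V := \{z \in \Z^d : T(0,z) \ge T_K(e_2,z)\}, \qquad \underline V := \{z \in \Z^d : T(0,z) < T_K(e_2,z)\},
\]
so the event in \eqref{e:coex-K} reads ``both $\overline V$ and $\underline V$ are infinite''; denote it $E$, so $p := \P[E] > 0$. The strategy is to show that $E$ is contained in a union, over polynomially many triples $(y,y',\Delta)$, of the event appearing in the claim, and then conclude by pigeonhole.

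The first step is geodesic closedness. A direct triangle-inequality argument shows that $\underline V$ is stable under geodesics from $0$: if $z \in \underline V$ and $w$ lies on some $\gamma \in \Gamma(0,z)$, then
\[
T(0,w) + T(w,z) = T(0,z) < T_K(e_2,z) \le T_K(e_2,w) + T(w,z),
\]
whence $w \in \underline V$. Symmetrically, $\overline V$ is stable under geodesics from $e_2$. On $E$, the set $\underline V$ is unbounded, so for any $n \ge 1$ one picks $z \in \underline V$ with $\|z\|_\infty > n$ and any geodesic from $0$ to $z$ stays in $\underline V$ and crosses $S_n$, producing a point of $\underline V \cap S_n$. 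The same argument applied to $\overline V$ (valid since $\|e_2\|_\infty = 1 \le n$) yields $\overline V \cap S_n \ne \emptyset$. Since $S_n$ is connected as a subgraph of $\Z^d$ for $n \ge 1$ and is partitioned by these two non-empty pieces, there exist neighbours $y \sim y' \in S_n$ with $y \in \underline V$ and $y' \in \overline V$.

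For such $y,y'$, define $\Delta := T(0,y) - T_K(e_2,y')$. Writing
\[
\Delta = \bigl[T(0,y) - T(e_2,y)\bigr] + \bigl[T(e_2,y) - T(e_2,y')\bigr] - K,
\]
the first bracket is at most $K-1$ by integrality (thanks to \eqref{e:support-mu-0S}), and the second is bounded in absolute value by $\tau(y,y') \le S$, giving $\Delta \le S-1$. The analogous decomposition
\[
\Delta = \bigl[T(0,y) - T(0,y')\bigr] + \bigl[T(0,y') - T(e_2,y')\bigr] - K
\]
gives $\Delta \ge -S$, so $\Delta \in \{-S,\ldots,S\}$. This establishes the inclusion
\[
E \subset \bigcup_{\substack{y \sim y' \in S_n \\ \Delta \in \{-S,\ldots,S\}}} \bigl\{T(0,y) < T_K(e_2,y),\; T(0,y') \ge T_K(e_2,y'),\; T(0,y) = T_K(e_2,y') + \Delta\bigr\}.
\]
Since the union has at most $2d|S_n|(2S+1) = O(n^{d-1})$ terms, the union bound produces one triple realising probability at least $p/(C_0 n^{d-1})$, as required.

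The only mildly delicate point is the non-emptiness of both $\underline V \cap S_n$ and $\overline V \cap S_n$ on $E$ for each individual $n$: this does not follow from the mere infiniteness of the two sets, but rather from the geodesic closedness observation, which forces the tree of geodesics rooted at $0$ (respectively at $e_2$) to cross every sphere $S_n$ inside $\underline V$ (respectively $\overline V$). Once this observation is in hand, the bound on $\Delta$ is a routine triangle-inequality computation and the conclusion is a standard pigeonhole.
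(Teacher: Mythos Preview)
Your proof is correct and follows essentially the same approach as the paper: both start from the coexistence event \eqref{e:coex-K}, use geodesic closedness to produce points of $\underline V$ and $\overline V$ on each sphere $S_n$, pass to adjacent $y\sim y'$ via connectedness of $S_n$, bound the offset $\Delta$ by $S$ using the defining inequalities for $\underline V,\overline V$ together with the triangle inequality, and conclude by pigeonhole. The only cosmetic difference is that the paper bounds $|\Delta|\le\tau(y,y')$ directly from the two chains $T_K(e_2,y')\le T(0,y')\le T(0,y)+\tau(y,y')$ and $T(0,y)\le T_K(e_2,y)\le T_K(e_2,y')+\tau(y,y')$, whereas you split $\Delta$ into Busemann-type pieces and invoke integrality; both yield $\Delta\in\{-S,\dots,S\}$.
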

\begin{proof} The claim follows from the following inequality by choosing some optimal $y,y'$ and $\Delta$:
\begin{align} \label{e:intermediaire:c:coex-K-yyprime-Delta}
\P[\text{event }\eqref{e:coex-K}]  
 \le \P\Big[& \text{there exists } y \sim y' \in S_n \text{ and } \Delta \in \{-S,\dots,S\} \text{ such that } \nonumber \\
& T(0,y) < T_K(e_2,y) \text{ and } T(0,y') \ge T_K(e_2,y') \text{ and } T(0,y)=T_K(e_2,y')+\Delta\Big].
\end{align}
We now aim at proving \eqref{e:intermediaire:c:coex-K-yyprime-Delta}.
Assume that the event \eqref{e:coex-K} occurs. 
Choose $z$ such that $B_z(0,e_2) < K$ and $\|z\|_\infty \ge n$.
Consider a geodesic $\gamma$ from $0$ to $z$. By the same considerations as in \eqref{e:gamma_in_V}, all points $w$ of $\gamma$ satisfy $B_w(0,e_2)<K$.
In particular, there exists $u \in S_n$ such that $B_u(0,e_2)<K$.
Similarly, considering a geodesic from $0$ to a point of $\{z' : B_{z'}(0,e_2) \ge K\}$ such that $\|z'\|_\infty \ge n$, 
we get the existence of $u' \in S_n$ such that $B_{u'}(0,e_2) \ge K$.
Following a path inside $S_n$ from $u$ to $u'$, we finally get the existence of $y \sim y'$ on $S_n$ such that
$B_y(0,e_2)<K$ and $B_{y'}(0,e_2) \ge K$.
Set $\Delta = T(0,y) - T_K(e_2,y') \in \Z$.
Let us check $|\Delta| \le \tau(y,y')$.
This is a consequence of the following inequalities:
\[
T_K(e_2,y') \le T(0,y') \le T(0,y)+\tau(y,y') \text{ and } T(0,y) \le T_K(e_2,y) \le T_K(e_2,y')+\tau(y,y').
\]
Hence, by \eqref{e:support-mu-0S}, $|\Delta| \le S$.  This ends the proof of \eqref{e:intermediaire:c:coex-K-yyprime-Delta} and thus the proof of the claim.
\end{proof}

\begin{claim}\label{c:coex-y} There exists $C'>0$ such that, for all $n \ge 1$, there exists $u \in S_n$ such that
\[
\P\Big[T(0,u)=T_K(e_2,u) \text{ and there exists } \gamma^1 \in \Gamma(0,u) \text{ and } \gamma^2 \in \Gamma(e_2,u) \text{ such that } \gamma^1 \cap \gamma^2 = \{u\}\Big] \ge \frac{C'}{n^{d-1}}.
\]
\end{claim}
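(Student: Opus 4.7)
The plan is to leverage Claim \ref{c:coex-K-yyprime-Delta} and then to perform a finite-energy modification on the edge $\{y,y'\}$. Fix $n \ge 1$ and let $y,y' \in S_n$ with $y \sim y'$, $\Delta \in \{-S,\dots,S\}$, and $C>0$ be as provided by Claim \ref{c:coex-K-yyprime-Delta}. Denote by $E_n$ the event
\[
E_n = \{T(0,y) < T_K(e_2,y),\; T(0,y') \ge T_K(e_2,y'),\; T(0,y) = T_K(e_2,y') + \Delta\},
\]
which has probability at least $C/n^{d-1}$.

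First I would observe that, on $E_n$, any geodesics $\gamma^1 \in \Gamma(0,y)$ and $\gamma^2 \in \Gamma(e_2,y')$ are automatically disjoint, and in particular neither path uses the edge $\{y,y'\}$. Indeed, if they shared a vertex $w$, exchanging sub-paths at $w$ would give $T(0,y') + T(e_2,y) \le T(0,y) + T(e_2,y')$, whereas subtracting the two hypothesis inequalities of $E_n$ yields the reverse strict inequality. Since $y$ is the endpoint of $\gamma^1$ and $y'$ that of $\gamma^2$, disjointness forces $\gamma^1$ to avoid $y'$ and $\gamma^2$ to avoid $y$; hence neither path can traverse $\{y,y'\}$.

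Next comes a deterministic edge modification. Set $u = y$ if $\Delta \ge 0$ and $u = y'$ if $\Delta < 0$, and put $v = |\Delta| \in \{0,\dots,S\}$. Let $\tau'$ be obtained from $\tau$ by replacing $\tau(y,y')$ with $v$; under $\tau'$ the paths $\gamma^1, \gamma^2$ keep their original $\tau$-lengths. The identity $T(0,y) = K + T(e_2,y') + \Delta$ is arranged exactly so that the two paths to $u$, namely $\gamma^1$ and $\gamma^2 \cup \{y',y\}$ (when $\Delta \ge 0$), or $\gamma^1 \cup \{y,y'\}$ and $\gamma^2$ (when $\Delta < 0$), have $\tau'$-lengths differing by exactly $K$. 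A short case analysis on any competing path, split according to whether it uses $\{y,y'\}$, shows that both designated paths are $\tau'$-geodesics: a competitor avoiding the modified edge keeps its old cost, which by the strict integer-valued inequality $T(0,y) < T_K(e_2,y)$ lies strictly above the target, while a competitor using the edge is controlled by decomposing at the visit of $y$ or $y'$ and using $T(0,y') \ge T_K(e_2,y')$ together with the edge cost $v$. The two designated paths intersect only at $u$ because $\gamma^1 \cap \gamma^2 = \emptyset$ and the extra edge only contributes the endpoint $u$, so the event of Claim \ref{c:coex-y} for $\tau'$ holds.

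Finally I would convert this deterministic modification into a probability bound by a resampling argument. Introduce an independent copy $\tau^*$ of $\tau$ and set $\tau^{\mathrm{new}}(e) = \tau(e)$ for $e \ne \{y,y'\}$, $\tau^{\mathrm{new}}(y,y') = \tau^*$; by the i.i.d.\ structure, $(\tau^{\mathrm{new}}(e))_e$ has the same law as $(\tau(e))_e$. On the event $E_n \cap \{\tau^* = v\}$, which by independence has probability $\P[E_n] \cdot \P[\tau = v]$, the environment $\tau^{\mathrm{new}}$ coincides with $\tau'$, so the event of Claim \ref{c:coex-y} for this $u$ holds in $\tau^{\mathrm{new}}$. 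Taking $C' = C \cdot \min_{0 \le k \le S} \P[\tau = k] > 0$ (positive by \eqref{e:support-mu-0S}) yields the claimed lower bound. The main obstacle is the verification in the modification step: altering $\tau(y,y')$ to $v$ can in principle create cheaper competitors to the two designated paths, and it is precisely the strict inequalities in $E_n$, combined with the integrality granted by \eqref{e:support-mu-0S}, that supply the buffer absorbing the $v$-correction.
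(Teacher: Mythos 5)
Your proof is correct and follows essentially the same route as the paper: starting from the event of Claim~\ref{c:coex-K-yyprime-Delta}, modify the edge $\{y,y'\}$ to cost $|\Delta|$, check that the two candidate paths become geodesics to $u$ meeting only at $u$, and convert the deterministic modification into a probability bound via resampling with cost $\P[\tau=|\Delta|]$. A small bonus: you spell out the exchange argument showing that the inequalities of Claim~\ref{c:coex-K-yyprime-Delta} already force any $\gamma\in\Gamma(0,y)$ and $\gamma'\in\Gamma(e_2,y')$ to be disjoint and to avoid $\{y,y'\}$, a fact the paper uses implicitly when it "fixes $\gamma,\gamma'$ disjoint."
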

\begin{proof} Let $n \ge 1$. Let $\Delta,y,y'$ be given by Claim \ref{c:coex-K-yyprime-Delta}.
We aim at proving
\begin{equation}\label{e:lien-yyprime-u}
\P[\text{event of Claim } \ref{c:coex-K-yyprime-Delta}] \P[\tau(y,y')=\Delta] \le \P[\text{event of Claim } \ref{c:coex-y}]
\end{equation}
for some appropriate $u$ which will be $y$ or $y'$ depending on whether $\Delta$ is non-negative or not.
Once \eqref{e:lien-yyprime-u} is proven, the claim follows by Claim \ref{c:coex-K-yyprime-Delta} and by \eqref{e:support-mu-0S}.

Assume that the event of Claim \ref{c:coex-K-yyprime-Delta} occurs. 
We will use repeatedly in the course of the proof, and without explicitly stating it, the properties guaranteed by this event.
Fix $\gamma \in \Gamma(0,y)$ and $\gamma' \in \Gamma(e_2,y')$ such that $\gamma \cap \gamma' = \emptyset$.
There are two cases depending on whether $\Delta$ is non-negative or not.
The proof is essentially the same in each case.

Let us first assume $\Delta \ge 0$. 
We set $\gamma^1=\gamma$ and we define $\gamma^2$ as the concatenation of the paths $\gamma'$ and $(y',y)$.
Define new edge times $(\overline\tau(e))_e$ by setting 
\[
\overline\tau(e) = \begin{cases} \tau(e) & \text{ if } e \neq \{y,y'\}, \\ \Delta & \text{ if } e  = \{y,y'\}. \end{cases}
\]
We use the notations $\overline\tau, \overline\tau_K, \overline T, \overline T_K$ and $\overline \Gamma$ to denote quantities associated with these new edge times.
First note
\[
\overline\tau(\gamma^1)=\tau(\gamma^1)=T(0,y) \text{ and } \overline\tau_K(\gamma^2)=\tau_K(\gamma')+\Delta = T_K(e_2,y')+\Delta=T(0,y).
\]
Distinguishing whether paths uses the edge $\{y,y'\}$ or not, we get
\[
\overline T(0,y) \ge \min\big(T(0,y) , T(0,y') + \Delta\big) \ge \min\big(T(0,y) , T_K(e_2,y') + \Delta\big)=T(0,y).
\]
As $\overline\tau(\gamma^1) = T(0,y)$ and as $\gamma^1$ is a path from $0$ to $y$, we get
\[
\overline\tau(\gamma^1) = \overline T(0,y) = T(0,y) \text{ and } \gamma^1 \in \overline\Gamma(0,y).
\]
Similarly,
\[
\overline T_K(e_2,y) \ge \min\big(T_K(e_2,y) , T_K(e_2,y') + \Delta\big) = T(0,y).
\]
As $\overline\tau_K(\gamma^2) = T(0,y)$ and as $\gamma^2$ is a path from $e_2$ to $y$, we get
\[
\overline\tau_K(\gamma^2) =\overline T_K(e_2,y) = T(0,y) \text{ and } \gamma^2 \in \Gamma(e_2,y).
\]
Setting $u=y$ we thus have, when $\Delta \ge 0$,
\begin{align}
& \{\text{event of Claim }\ref{c:coex-K-yyprime-Delta}\} \nonumber \\
& \subset 
\Big\{\overline T(0,u)=\overline T_K(e_2,u) \text{ and there exists } \gamma^1 \in \overline\Gamma(0,u) \text{ and } \gamma^2 \in \overline\Gamma(e_2,u) 
\text{ such that } \gamma \cap \gamma' = \{u\} \Big\}. \label{e:inclusion-yyprime-u}
\end{align}
Finally, note that the edge times $(\overline\tau(e))_e$ can be obtained as follows: first resample independently $\tau(y,y')$ and then assume $\tau(y,y')=\Delta$.
We thus get \eqref{e:lien-yyprime-u} with $u=y$ which concludes the proof in the case $\Delta \ge 0$.

Let us sketch the proof in the case $\Delta < 0$.
We define $\gamma^1$ as the concatenation of $\gamma$ and $(y,y')$ and we set $\gamma^2=\gamma'$.
Define new edge times $(\overline\tau(e))_e$ by setting 
\[
\overline\tau(e) = \begin{cases} \tau(e) & \text{ if } e \neq \{y,y'\}, \\ -\Delta & \text{ if } e  = \{y,y'\}. \end{cases}
\]
We have, arguing as above,
\begin{align*}
\overline\tau(\gamma^1) & =T(0,y)-\Delta=T_K(e_2,y'), \\
\overline\tau_K(\gamma^2)& =T_K(e_2,y'), \\
\overline T(0,y') & \ge \min\big(T(0,y') , T(0,y) - \Delta\big)  = T_K(e_2,y'), \\
\overline T_K(e_2,y') &\ge \min\big(T_K(e_2,y'),T_K(e_2,y)-\Delta) \ge T_K(e_2,y'). 
\end{align*}
We thus get \eqref{e:inclusion-yyprime-u} with $u=y'$ and we conclude as before.
\end{proof}

\begin{claim}\label{c:final-coex-chirurgie} There exists $C''>0$ such that, for all $n \ge 1$, 
\[
\P[\text{there exists } \gamma^1 \in \Gamma(0,u) \text{ and } \gamma^2 \in \Gamma(0,u) \text{ such that } \gamma^1 \cap \gamma^2 = \{0,u\}\Big] \ge \frac{C''}{n^{d-1}}.
\]
\end{claim}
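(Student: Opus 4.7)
The plan is to run one final modification argument, this time on the edge $\{0,e_2\}$, converting the mismatched pair of geodesics supplied by Claim~\ref{c:coex-y} into two geodesics with the same pair of endpoints. Let $u \in S_n$ and $E$ be as in that claim: on $E$ there exist $\gamma^1 \in \Gamma(0,u)$ and $\gamma^2 \in \Gamma(e_2,u)$ with $\gamma^1 \cap \gamma^2 = \{u\}$, together with the key equality $T(0,u) = T_K(e_2,u) = K + T(e_2,u)$. For $n \ge 2$ we automatically have $u \neq e_2$ (since $\|u\|_\infty = n$), and therefore $\gamma^1$ cannot contain the edge $\{0,e_2\}$: otherwise $e_2 \in \gamma^1 \cap \gamma^2 = \{u\}$, a contradiction. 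The single case $n=1$ is absorbed in the final constant $C''$.

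Next I would introduce the modified environment $\overline\tau$ by setting $\overline\tau(\{0,e_2\}) = K$ and $\overline\tau(e) = \tau(e)$ for every other edge, and let $\pi$ denote the concatenation of the edge $(0,e_2)$ with $\gamma^2$. Direct computation yields $\overline\tau(\gamma^1) = \tau(\gamma^1) = T(0,u)$ and $\overline\tau(\pi) = K + \tau(\gamma^2) = K + T(e_2,u) = T(0,u)$, and by construction $\gamma^1 \cap \pi = \{0,u\}$. The decisive step is to verify $\overline T(0,u) \ge T(0,u)$, so that both $\gamma^1$ and $\pi$ lie in $\overline\Gamma(0,u)$. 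I would argue pathwise on self-avoiding paths $\sigma$ from $0$ to $u$: if $\sigma$ avoids $\{0,e_2\}$ then $\overline\tau(\sigma) = \tau(\sigma) \ge T(0,u)$, whereas if $\sigma$ uses this edge then self-avoidance forces it to be the initial edge of $\sigma$ (since the vertex $0$ appears only at the start of $\sigma$), giving $\overline\tau(\sigma) = K + \tau(\sigma')$ for a self-avoiding $\sigma' : e_2 \to u$ which does not traverse $\{0,e_2\}$, and hence $\overline\tau(\sigma) \ge K + T(e_2,u) = T(0,u)$.

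Finally I would transfer the conclusion back to the original environment by the now-familiar resampling argument. Writing $X = \tau(\{0,e_2\})$ and $Y = (\tau(e))_{e \neq \{0,e_2\}}$, the event $F$ of the claim is a function of $(X,Y)$ and the preceding paragraphs show that $E(X,Y)$ implies $F(K,Y)$. By independence of $X$ and $Y$,
\[
\P[F] \;\ge\; \P\big[F,\,X=K\big] \;=\; \P[\tau=K]\cdot\P\big[F(K,Y)\big] \;\ge\; \P[\tau=K]\cdot\P[E],
\]
and since \eqref{e:support-mu-0S} guarantees $\P[\tau=K] > 0$ because $K \in \{0,\dots,S\}$, combining with Claim~\ref{c:coex-y} yields the desired lower bound $C''/n^{d-1}$. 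I expect the only subtle step to be the geodesic-preservation check in the middle paragraph; the rest is routine bookkeeping.
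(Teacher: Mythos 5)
Your proposal is correct and follows the same route as the paper's proof: modify $\tau(\{0,e_2\})$ to $K$, concatenate $(0,e_2)$ with $\gamma^2$, verify the modified passage time $\overline T(0,u)$ is not decreased below $T(0,u)$, and resample. The explicit check that $\gamma^1$ avoids $\{0,e_2\}$ for $n\ge 2$ (and the remark that $n=1$ is absorbed in the constant) is a welcome clarification of a step the paper leaves implicit.
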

\begin{proof} Let $n \ge 1$. Let $u$ be given by Claim \ref{c:coex-y}.
We aim at proving
\begin{equation}\label{e:inegalite-u-final}
\P[\text{event of Claim }\ref{c:coex-y}] \P[\tau=K] \le \P[\text{event of Claim } \ref{c:final-coex-chirurgie}].
\end{equation}
By Claim \ref{c:coex-y} and \eqref{e:support-mu-0S} this will conclude the proof.

Assume that the event of Claim \ref{c:coex-y} occurs.
Let $\gamma^1$ and $\gamma^2$ be as in this event.
Now define new edge times $(\overline\tau(e))_e$ by setting 
\[
\overline\tau(e) = \begin{cases} \tau(e) & \text{ if } e \neq \{0,e_2\}, \\ K & \text{ if } e  = \{0,e_2\}. \end{cases}
\]
Define $\gamma^3$ as the concatenation of $(0,e_2)$ and of $\gamma^2$.
We get, distinguishing whether the path uses the edge $\{0,e_2\}$ or not for the first inequality, 
\begin{align*}
\overline T(0,u) & \ge \min\big(T(0,u),T_K(e_2,u)\big) = T(0,u), \\
\overline \tau(\gamma^1) & = \tau(\gamma^1) = T(0,u), \\
\overline \tau(\gamma^3) & = \tau_K(\gamma^2) = T_K(e_2,u) = T(0,u).
\end{align*}
Therefore $\gamma^1$ and $\gamma^3$ belongs to $\overline \Gamma(0,u)$.
Moreover $\gamma^1 \cap \gamma^3 = \{0,u\}$.
Arguing as in the proof of Claim \ref{c:coex-y}, we get \eqref{e:inegalite-u-final} which concludes the proof.
\end{proof}

\appendix

\section{Proof of Lemma \ref{l:controle-cardinal}}

\label{s:proof:l:controle-cardinal}

We follow very closely the proof of Theorem 6.2 in \cite{Auffinger-Damron-Hanson-Adv-Math-2015} which is the analogous of Lemma \ref{l:controle-cardinal} for point to point geodesics.

The proof requires some results about Bernoulli bond percolation on $\Z^d$.
Let $p \in [0,1]$.
Let $(\omega(e))_{e \in \cE}$ be a family of independent random variables with distribution Bernoulli of parameter $p$ indexed by the set $\cE$ of edges of $\Z^d$.
Consider the random sub-graph of $\Z^d$ whose vertex set is $\Z^d$ and whose edge set is $\{e \in \cE : \omega(e)=1\}$.
It is known that for $p$ strictly larger than the critical threshold $p_c \in (0,1)$ this random sub-graph contains a unique infinite component.
We call it the infinite cluster and denote it by $I$.
We write $d_I$ for the intrinsic graph distance on $I$.
For all $n \ge 1$ we denote by $B_n$ the ball $B_n=\{-n,\dots,n\}^d$.
We need the following two lemmas about percolation.

\begin{lemma}[Lemma 6.3 in\cite{Auffinger-Damron-Hanson-Adv-Math-2015}] \label{l:rencontre_I} There exists $p_0 \in (p_c,1)$ such that, for any $p \in [p_0,1]$, there exists $C>0$ such that, for all $n \ge 1$,
\[
\P[\text{any path from $0$ to $S_n$ intersects the infinite cluster}\,] \ge 1-\exp(-Cn).
\]
\end{lemma}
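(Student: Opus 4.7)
The plan is a Peierls-type contour argument showing that, for $p$ close to $1$, the connected component of $0$ in $\Z^d \setminus I$ is typically small and hence cannot reach $S_n$.

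First I would reduce the statement to a hole estimate: if some path from $0$ to $S_n$ avoids $I$, then $0 \notin I$ and the $\Z^d$-connected component $F$ of $0$ in the vertex set $\Z^d \setminus I$ meets $S_n$, so $\diam(F) \ge n$. Moreover $F$ is finite (since $I$ is the unique infinite cluster), and every edge of the edge-boundary $\partial_e F$ is necessarily closed in $\omega$: indeed, any edge $\{u,v\}$ with $u \in F$ and $v \notin F$ must satisfy $v \in I$ (otherwise $v$ would belong to the same component of $\Z^d \setminus I$ as $u$, namely $F$), and if this edge were open then $u$ would be in the same open cluster as $v$, giving $u \in I$, a contradiction.

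Next I would establish the isoperimetric lower bound $|\partial_e A| \ge n/d$ for any finite $\Z^d$-connected $A \ni 0$ with $\diam(A) \ge n$. This follows by projecting $A$ on a coordinate direction $e_i$ along which its $\ell^\infty$-diameter is at least $n/d$: by connectedness, $A$ meets each of the intermediate hyperplanes $\{z_i = t\}$, and in each such slice a vertex with extremal coordinate in a perpendicular direction $e_j$ produces a distinct edge of $\partial_e A$. Combined with the classical Peierls-type counting bound that the number of finite $\Z^d$-connected $A \ni 0$ with $|\partial_e A| = s$ is at most $K(d)^s$ for some constant $K(d) > 0$, this gives
\[
\P\big[\diam(F) \ge n\big] \le \sum_{s \ge n/d} K(d)^s (1-p)^s,
\]
which is geometric and exponentially small in $n$ as soon as $K(d)(1-p) < 1$. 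Choosing any $p_0 \in (p_c,1)$ with $p_0 > 1 - 1/K(d)$ (enlarging $p_0$ further if needed to ensure $p_0 > p_c$) then yields the lemma.

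The main obstacle is the Peierls-type counting bound on connected subsets of given edge-boundary size. In dimension two this is elementary: $\partial_e F$ corresponds to a self-avoiding dual circuit of length $|\partial_e F|$ surrounding $0$, and the number of such circuits grows only exponentially in the length. In higher dimensions the bound is still classical but more delicate, following from surface or cocycle counting arguments in the dual lattice; as a cleaner alternative, one can replace $\partial_e F$ by a minimal closed edge cutset separating $F$ from infinity and invoke the exponential bound on the number of minimal cutsets of a given size, which is the route used in several references on supercritical percolation and in particular in the proof of Theorem 6.2 of \cite{Auffinger-Damron-Hanson-Adv-Math-2015}.
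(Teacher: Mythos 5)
The paper does not prove this lemma --- it is cited directly as Lemma~6.3 of Auffinger, Damron and Hanson (Adv.~Math.~2015) --- so there is no internal proof to compare against; I can only assess your argument on its own merits. Your overall Peierls strategy is the right one, and the reduction, the isoperimetric step, the counting step and the choice of $p_0$ are all sound (the $n/d$ in the isoperimetric bound is even a bit pessimistic --- the extent of $A$ in some coordinate direction is already at least $n$, not $n/d$ --- but that only improves things).

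There is, however, a genuine gap in the sentence ``Moreover $F$ is finite (since $I$ is the unique infinite cluster).'' This does not follow. The set $F$ is the $\Z^d$-connected (nearest-neighbour, not open-nearest-neighbour) component of $0$ in the vertex set $\Z^d\setminus I$; the uniqueness of the infinite \emph{open} cluster says nothing about the $\Z^d$-connected components of its complement. In fact, for $p$ just above $p_c$ the density of $\Z^d\setminus I$ is close to $1$ and this set does have an infinite component, so the claim is false on part of the supercritical regime. For $p$ close to $1$ the claim is true, but its proof is exactly the Peierls estimate you are trying to set up, so invoking it at this point is circular. Concretely, the inequality $\P[\diam(F)\ge n]\le\sum_{s\ge n/d}K(d)^s(1-p)^s$ only bounds $\P[\diam(F)\ge n,\ F\text{ finite}]$; the term $\P[F\text{ infinite}]$ is unaccounted for.

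The standard way to repair this is to produce a \emph{finite} connected set by construction rather than asserting that $F$ is finite. If some path from $0$ to $S_n$ avoids $I$, fix a self-avoiding such path $\pi=(x_0=0,\dots,x_k)$ with $x_k\in S_n$, and set $U=\bigcup_{j=0}^k \cC(x_j)$, where $\cC(v)$ denotes the open cluster of $v$. Each $\cC(x_j)$ is finite because $x_j\notin I$, and consecutive clusters share the nearest-neighbour pair $\{x_j,x_{j+1}\}$, so $U$ is a finite $\Z^d$-connected set containing $0$ with $\diam(U)\ge n$. Moreover every edge of $\partial_e U$ is closed: if $\{u,w\}$ with $u\in\cC(x_j)$ were open, then $w$ would lie in the same open cluster and hence in $U$. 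With $U$ in place of $F$, your isoperimetric bound, the minimal-cutset counting bound you already mention, and the choice of $p_0$ go through unchanged and give the stated exponential estimate.
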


\begin{lemma}[Theorem 1.1 in \cite{Antal-Pisztora}] \label{l:AP} Let $p>p_c$. There exists a constant $\rho \in [1,\infty)$ such that
\[
\limsup_{\|y\|_\infty \to \infty} \frac{1}{\|y\|_\infty} \ln \P[d_I(0,y) \ge \rho \|y\|_\infty, \; 0 \in I, \; y \in I]<0.
\]
\end{lemma}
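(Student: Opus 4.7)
The plan is a Pisztora-style block renormalization that maps the question onto a highly supercritical independent site percolation on a coarse-grained lattice, where chemical distances are obviously comparable to Euclidean ones. Fix a macroscopic scale $L$ and partition $\Z^d$ into disjoint cubes $B_z = Lz + \{0,\dots,L-1\}^d$ for $z \in \Z^d$. Call $B_z$ \emph{good} if inside a slight thickening of $B_z$ there is a connected open cluster $\cC_z$ touching every face of $B_z$, and every other open cluster meeting $B_z$ has diameter at most $L/10$. Two ingredients drive the renormalization: first, by the Grimmett--Marstrand slab theorem, for every $p > p_c$ the probability that $B_z$ is bad goes to $0$ as $L \to \infty$; second, since the event ``$B_z$ is good'' depends only on edges in a bounded neighbourhood of $B_z$, the Liggett--Schonmann--Stacey lemma provides a stochastic domination of the good-block process from below by an independent Bernoulli site percolation on $\Z^d$ of density arbitrarily close to $1$, once $L$ is large.

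A structural lemma then shows that for two adjacent good blocks $B_z, B_{z'}$ the crossing clusters $\cC_z$ and $\cC_{z'}$ coincide: both touch the common face, so any open cluster induced there has diameter $\ge L$, and by the diameter-bound clause this cluster must coincide with both $\cC_z$ and $\cC_{z'}$. Iterating, $\bigcup_z \cC_z$ over an infinite connected good-block component is a single open cluster, which must therefore be $I$.

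Given $y$ with $0, y \in I$ and $n := \|y\|_\infty$, the path from $0$ to $y$ inside $I$ is built in three stages. First, connect $0$ (respectively $y$) to the crossing cluster of a good block within distance $O(L)$; a failure would force a large non-crossing cluster of $I$ near $0$ or $y$, an event of probability exponentially small in $n$ by the density of good blocks. Second, on the macroscopic lattice, choose a nearly-straight sequence of $O(n/L)$ blocks joining these two good blocks, detouring around the $*$-connected components of bad blocks that block the way. Since bad blocks form a highly subcritical site percolation, these detours are short: with probability $1 - e^{-cn}$ the total extra length they contribute is at most $\varepsilon n/L$. Third, inside each good block on the route, traverse from entry face to exit face via the crossing cluster, adding $O(L)$ to the length. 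The resulting path lies in $I$ and has length at most $\rho n$ for some $\rho$ depending only on $L$ and the density of bad blocks, with failure probability $e^{-cn}$, which establishes the lemma.

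The crux and main obstacle is the renormalization step itself. Defining ``good'' so that (i) it is genuinely local, (ii) its density tends to $1$ at large scales for every $p > p_c$, and (iii) the crossing-cluster merging across adjacent good blocks is automatic, is delicate and rests on non-elementary inputs (Grimmett--Marstrand slab percolation and Pisztora's block construction). Once the block picture is in place, the path construction and length estimate are essentially combinatorial, and the exponential probability bound becomes a standard large-deviation estimate for Bernoulli site percolation transferred to the block process via the stochastic domination. A secondary technical point is the surgery around bad-block clusters, which must exploit the merging property of step two on the shell of good blocks surrounding the bad region.
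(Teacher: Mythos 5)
The paper does not prove this lemma; it is imported verbatim as Theorem~1.1 of Antal and Pisztora, so there is no in-paper argument to compare your sketch against. For what it is worth, your renormalization outline does match the strategy of that reference in broad strokes: Pisztora-style good boxes whose density tends to one (via Grimmett--Marstrand), Liggett--Schonmann--Stacey domination by a high-density independent block process, merging of crossing clusters across shared faces of adjacent good boxes, and a Peierls-type bound on the extra length contributed by detours around $*$-clusters of bad boxes; and you have correctly singled out the technically delicate steps. One imprecision worth flagging: two crossing clusters both meeting a common face do not automatically coincide merely because each has diameter $\ge L$ there --- one needs the uniqueness/smallness clause of the ``good'' event evaluated in a region that contains the shared face to force the clusters to actually meet, which is precisely what Pisztora's definition is engineered to guarantee. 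Since the paper treats this lemma as a black-box citation, none of this machinery appears there, and your sketch is best read as a faithful summary of the cited reference rather than an alternative to anything in the paper.
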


We also need the following result about first-passage percolation. It holds under our running assumption. 

\begin{lemma}[Proposition 5.8 in \cite{Kesten-St-Flour}] \label{l:5.8-Saint-Flour}
Assume \eqref{e:running_assumption}.
There exists $\alpha>0$ and $C>0$ such that, for all $n \ge 1$
\[
\P[\text{there exists a self-avoiding path $\gamma$ starting at $0$ with $|\gamma| \ge  n$ and $T(\gamma) \le \alpha n$}] \le e^{-Cn}.
\]
\end{lemma}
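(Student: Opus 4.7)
The plan is to reduce the problem to a question about subcritical Bernoulli percolation and then close it with a union bound over a combinatorial skeleton of $\gamma$. Since $\P[\tau=0]<p_c$ and $\delta\mapsto\P[\tau\le\delta]$ is a right-continuous distribution function, I can fix $\delta>0$ such that $p_\delta:=\P[\tau\le\delta]<p_c$. Declare an edge $e$ \emph{light} when $\tau(e)\le\delta$ and \emph{heavy} otherwise. The light edges form a subcritical Bernoulli bond percolation, and by the sharpness of the phase transition (Men'shikov, Aizenman--Barsky) there is a constant $c_1>0$ such that
\[
\P\bigl[\,|C^{\mathrm{light}}_v|\ge k\,\bigr]\;\le\;e^{-c_1 k}\qquad\text{for every } v\in\Z^d,\ k\ge 1,
\]
where $C^{\mathrm{light}}_v$ denotes the light cluster of $v$.

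Now suppose $\gamma=(x_0=0,x_1,\ldots,x_m)$ is a self-avoiding path with $m\ge n$ and $T(\gamma)\le\alpha n$. Each heavy edge contributes more than $\delta$ to $T(\gamma)$, so $\gamma$ uses at most $R\le\alpha n/\delta$ heavy edges; erasing them decomposes $\gamma$ into at most $R+1$ maximal light sub-paths whose lengths $\sigma_1,\ldots,\sigma_{R+1}$ satisfy $\sum_j\sigma_j=m-R\ge(1-\alpha/\delta)n$. Each arc is a self-avoiding walk inside a single light cluster, so if the arc starting at a vertex $v_j$ has length $\sigma_j$ then $|C^{\mathrm{light}}_{v_j}|\ge\sigma_j+1$. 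I would then perform a union bound over the ``heavy skeleton'' of $\gamma$, namely the length $m$, the $R$ positions of heavy steps in $\{1,\ldots,m\}$ and the $2d$ possible orientations of each heavy edge. Once the skeleton is fixed the vertices $v_j$ are determined, the heavy marks contribute a factor $(1-p_\delta)^R$, and independence of the light environment from the heavy marks yields a product of cluster tails $\prod_j e^{-c_1\sigma_j}\le e^{-c_1(1-\alpha/\delta)n}$. The skeleton cost grows like $\binom{m}{R}(2d)^R$, and summing over $m\ge n$ the cluster gain wins as soon as $\alpha$ is chosen small enough, producing the announced $e^{-Cn}$ bound.

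The main obstacle is exactly this last balance. In the regime $\P[\tau=0]\in(1/(2d-1),p_c)$ one cannot replace the skeleton count by the crude bound $|\mathrm{SAW}_m(0)|\le(2d-1)^m$ and close with the naive exponential-moment estimate $\P[T(\gamma)\le\alpha n]\le e^{\lambda\alpha n}\,\E[e^{-\lambda\tau}]^m$: as $\lambda\to\infty$ one is left with the geometric factor $\bigl[(2d-1)\P[\tau=0]\bigr]^m$, which exceeds $1$, and the sum over $m$ diverges. The subcritical-percolation reduction sketched above precisely exchanges the global connective constant for the effective entropy of the light clusters actually visited, which is already exponentially small in the total light length; the careful bookkeeping behind this substitution is the content of \cite[Proposition~5.8]{Kesten-St-Flour}.
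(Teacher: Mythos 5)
The paper does not prove this lemma; it cites it directly from Proposition 5.8 of Kesten's Saint-Flour notes, so there is no in-paper argument against which to compare. Evaluating your sketch on its own merits: the light/heavy decomposition, the appeal to the subcritical cluster-size tail, and the diagnosis that the naive union bound over self-avoiding walks cannot close near $p_c$ are all the right ingredients. However, the step that carries the union bound --- ``once the skeleton [length $m$, positions of heavy steps in $\{1,\dots,m\}$, orientations of heavy edges] is fixed the vertices $v_j$ are determined'' --- is false, and this is where the argument breaks. Fixing the positions $1\le i_1<\dots<i_R\le m$ and orientations $o_1,\dots,o_R$ of the heavy steps says nothing about where the intermediate light arcs actually wander in $\Z^d$: the first light arc of length $\sigma_1=i_1-1$ may terminate at any of order $\sigma_1^{\,d}$ vertices of $C^{\mathrm{light}}_0$, so $v_1=w_1+o_1$, and hence every subsequent $v_j$, is a random vertex determined by the environment, not a deterministic function of the skeleton. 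Consequently the advertised product $\prod_j e^{-c_1\sigma_j}$ does not follow from the estimate you quote, which bounds $\P\bigl[|C^{\mathrm{light}}_v|\ge k\bigr]$ only for a \emph{fixed} vertex $v$, and the ``factor $(1-p_\delta)^R$ from the heavy marks'' likewise presupposes fixed heavy edges.

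The gap is repairable but requires work not present in the sketch. One route is to enlarge the skeleton to also record $v_0=0,v_1,\dots,v_R$; since each $v_j$ lies within $\|\cdot\|_1$-distance $\sigma_j+1$ of $v_{j-1}$, this costs an extra combinatorial factor of at most $\prod_j C(\sigma_j+1)^d$, which the exponential cluster decay can absorb. But even with all $v_j$ fixed, the events ``a light self-avoiding path of length $\sigma_j$ joins $v_{j-1}$ to $v_j-o_j$'' are not independent across $j$ (the light clusters can overlap), so one still needs either a sequential exploration argument using the spatial Markov property of the light percolation, or a BK/disjoint-occurrence argument exploiting that the light arcs of a self-avoiding path are edge-disjoint, to justify the product of cluster tails. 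None of this is supplied, and writing ``the vertices $v_j$ are determined'' conceals a genuinely missing step rather than summarizing one that has been carried out. (Two minor remarks: the natural threshold for the naive SAW union bound to fail is $1/\lambda_d$, where $\lambda_d<2d-1$ is the connective constant, not $1/(2d-1)$; and Kesten's own proof, predating Menshikov/Aizenman--Barsky, necessarily takes a different route from the sharpness-based cluster-tail reduction you propose.)
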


We are now ready to give the proof of Lemma \ref{l:controle-cardinal}.

\begin{proof}[Proof of Lemma \ref{l:controle-cardinal}] Let $p_0$ be given by Lemma \ref{l:rencontre_I}.
Let $M>0$ be such that $\P[\tau \le M] \ge p_0$.
We consider the Bernoulli bond percolation on $\Z^d$ defined by $\omega(e)=\1_{\tau(e) \le M}$ for all edge $e\in \cE$.
By construction, the parameter $p=\P[\tau \le M]$ of this percolation process satisfies $p \ge p_0>p_c$.
Let $\rho \ge 1$ be given by Lemma \ref{l:AP}.

Let $n \ge 1$. 
We implicitly assume that $n$ is large enough at various places.
Consider the boxes
\[
D = B_{\lfloor n/10 \rfloor} \text{ and } \overline D = ne_1 + B_{\lfloor n/10 \rfloor}.
\]
We now define several good events:
\begin{align*}
F_1 & = \{\text{any path from $0$ to $\partial D$ contains at least one vertex in $I$}\}, \\
\overline F_1 & = \{\text{any path from $ne_1$ to $\partial \overline D$ contains at least one vertex in $I$}\}, \\
F_2 & = \{\text{for all $x\in D \cap I$ and $\overline x \in \overline D \cap I$, there exists a path $\pi$ in $I$ from $x$ to $\overline x$ of length at most $2\rho n$}\}, \\
F_3 & = \{\text{for all $x \in D$ and all path $\pi$ starting from $x$, if $|\pi| \ge 3\rho M \alpha^{-1} n$, then $T(\pi) > 2\rho M n$}\}.
\end{align*}

\begin{figure}
	\begin{center}
		\begin{tikzpicture}[scale=0.45]
			\draw[fill=gray!20] (-2,-2) rectangle (2,2);
			\draw[fill=gray!20] (22,-2) rectangle (26,2);
			\draw[line width=2pt,red] (12,-4.5) -- (12,6); 
			\draw[line width=1pt] (0,0) .. controls +(0.5,0.5) and +(0,-0.5) .. (0.5,1.5) .. controls +(0,1.5) and +(-1.5,-0.5) .. (3,4) .. controls +(3,1) and +(-1.5,0) .. (7,4.2) .. controls +(1,0) and +(-2,1) .. (12,4.2);
			\draw[line width=1pt,blue] (0.5,1.5) .. controls +(1,-1) and +(-1,0) .. (3,0.5) .. controls +(2,0) and +(-2.5,0) .. (7,1.5) .. controls +(3,0) and +(-4,0) .. (15,-1) .. controls +(1,0) and +(-2,0) .. (20,0) .. controls +(1,0) and +(-1,1) .. (23,-1);
			\draw[line width=1pt] (24,0) .. controls +(0,-0.5) and +(0.5,0) .. (23,-1) .. controls +(-0.5,0) and +(0,0.5) .. (22.5,-2);
			\draw (0,0) node {$\bullet$};
			\draw (0.5,1.5) node {$\bullet$};
			\draw (12,4.2) node {$\bullet$};
			\draw (23,-1) node {$\bullet$};
			\draw (24,0) node {$\bullet$};
			\draw (0,0) node[left] {$0$};
			\draw (0.5,1.5) node[left] {$x$};
			\draw (23,-1) node[below right] {$\overline{x}$};
			\draw (24,0) node[below right] {$ne_1$};
			\draw (7,4.6) node {$\gamma_0$};
			\draw[blue] (10,0.5) node[below] {$\pi$};
			\draw (2.2,-2) node[below left] {$D$};
			\draw (26.2,-2) node[below left] {$\overline{D}$};
			\draw[red] (12,-4.5) node[right] {$H_n$};
        \end{tikzpicture}
		\caption{The path $\pi$ is contained in the infinite cluster $I$ and has controlled length.}\label{f: Figure 2.}
	\end{center}
\end{figure}
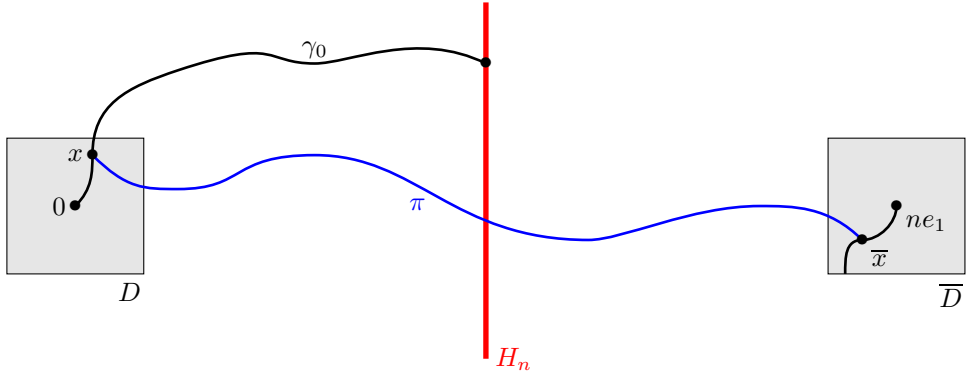

\begin{claim} \label{c:probaF} Set $F:=F_1 \cap \overline F_1 \cap F_2 \cap F_3$.
There exists $C>0$ such that, for all $n$ large enough, 
\[
\P[F] \ge 1-\exp(-Cn).
\]
\end{claim}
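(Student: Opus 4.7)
} The plan is to bound each of the four events $F_1^c$, $\overline F_1^c$, $F_2^c$, $F_3^c$ by a quantity of the form $\exp(-cn)$, and to conclude via the union bound
\[
\P[F^c] \le \P[F_1^c] + \P[\overline F_1^c] + \P[F_2^c] + \P[F_3^c].
\]

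First I would handle $F_1$: the inner boundary $\partial D$ is contained in the sphere $S_{\lfloor n/10 \rfloor}$, so any path from $0$ to $\partial D$ reaches that sphere. Since $M$ has been chosen so that the associated Bernoulli edge-percolation has parameter $p = \P[\tau \le M] \ge p_0$, Lemma \ref{l:rencontre_I} applies and gives $\P[F_1^c] \le \exp(-c_1 n)$ directly. The bound on $\overline F_1^c$ follows by translation invariance of the edge-weights.

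For $F_2$ I would invoke Lemma \ref{l:AP}. Every pair $(x, \overline x) \in D \times \overline D$ satisfies $\|x - \overline x\|_\infty \in [n - 2\lfloor n/10 \rfloor, n + 2\lfloor n/10 \rfloor]$, so $\rho \|x - \overline x\|_\infty \le 2\rho n$ for $n$ large; the event that $x, \overline x \in I$ and $d_I(x, \overline x) > 2\rho n$ is therefore contained in the exponentially small event of Lemma \ref{l:AP}, with probability at most $\exp(-c \|x - \overline x\|_\infty) \le \exp(-c' n)$. A union bound over the $O(n^{2d})$ such pairs yields $\P[F_2^c] \le \exp(-c_2 n)$ for $n$ large.

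Finally, for $F_3$, set $N := 3 \rho M \alpha^{-1} n$, chosen so that $\alpha N = 3\rho M n > 2\rho M n$. For each fixed $x \in D$, translation invariance allows me to apply Lemma \ref{l:5.8-Saint-Flour} at $x$, bounding by $\exp(-cN)$ the probability that some self-avoiding path starting at $x$ has $|\pi| \ge N$ and $T(\pi) \le \alpha N$. A union bound over the $|D| = O(n^d)$ base points then gives $\P[F_3^c] \le \exp(-c_3 n)$, \emph{modulo} the extension of Lemma \ref{l:5.8-Saint-Flour} from self-avoiding to arbitrary paths. This extension is the principal technicality I expect: a standard loop-erasure or block-decomposition argument reduces any long path to a self-avoiding sub-path of comparable length (up to a constant factor), at the cost of an additional polynomial union bound that is absorbed in the exponential. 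Once this is settled, summing the four estimates yields $\P[F^c] \le \exp(-cn)$ for some $c>0$ and all $n$ large enough.
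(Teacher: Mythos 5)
Your overall plan --- union bound over $F_1^c,\overline F_1^c, F_2^c, F_3^c$ and invoke Lemmas \ref{l:rencontre_I}, \ref{l:AP}, \ref{l:5.8-Saint-Flour} respectively --- is exactly what the paper does, and your treatment of $F_1$, $\overline F_1$ and $F_2$ (translation invariance, rescaling of the sphere radius, union bound over $O(n^{2d})$ pairs) is sound.

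The subtlety you flag for $F_3$ is real, but the fix you propose does not work. Loop-erasure does \emph{not} produce a self-avoiding sub-path of length comparable to the original: the path $(0,e_1,0,e_1,\dots,0,e_1)$ of length $2k$ loop-erases to a single edge, so there is no constant-factor comparison. In fact no extension of Lemma \ref{l:5.8-Saint-Flour} to arbitrary paths is possible under \eqref{e:running_assumption}: if $\P[\tau=0]>0$ (which is allowed), one can bounce back and forth along any zero-weight edge incident to a vertex of $D$, producing arbitrarily long paths of zero passage time, so an $F_3$ quantified over all paths has probability tending to $0$. The correct reading --- and the one the paper must intend, since its proof simply cites Lemma \ref{l:5.8-Saint-Flour} --- is that $F_3$ ranges over \emph{self-avoiding} paths. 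With that interpretation the lemma applies directly after a union bound over the $O(n^d)$ base points $x\in D$, and there is no extension to perform. Loop erasure does later enter, when $F_3$ is applied to bound $\diam(\gamma_x)$, but the facts used there are different and elementary: erasing loops does not \emph{increase} the passage time, and a self-avoiding path from $x$ to $z$ has at least $\|x-z\|_\infty$ edges; neither requires the erased path to have length comparable to the original.
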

\begin{proof}[Proof of the claim]
We lower bound $\P[F_1]$ and $\P[\overline F_1]$ using Lemma \ref{l:rencontre_I}, $\P[F_2]$ thanks to Lemma \ref{l:AP} and $\P[F_3]$  with the help of Lemma \ref{l:5.8-Saint-Flour}.
\end{proof}

\begin{claim} On $F$, 
\[
\diam\left(\bigcup_{\gamma_0 \in \Gamma_0} \gamma_0\right) \le 2(3\rho M\alpha^{-1}+1)n.
\]
\end{claim}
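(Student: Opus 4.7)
The plan is to show that on $F$, every geodesic $\gamma_0 \in \Gamma_0$ lies in the $\|\cdot\|_\infty$-ball of radius $(3\rho M \alpha^{-1}+1)n$ centred at the origin; the diameter estimate then follows at once. Fix therefore a geodesic $\gamma_0 = (0 = u_0, u_1, \ldots, u_L)$ with $u_L \in H_n$. For $n$ large enough $D$ is disjoint from $H_n$, so $\gamma_0$ must leave $D$; writing $j$ for the first index with $u_j \in \partial D$, the vertices $u_0, \ldots, u_j$ form a path from $0$ to $\partial D$ entirely inside $D$. Applying $F_1$ to this initial segment yields a first $I$-vertex $u_i$ of $\gamma_0$ with $i \le j$, and in particular $u_i \in I \cap D$.

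Next I would use $\overline F_1$ to extract some $\overline x \in I \cap \overline D$, and $F_2$ to join $u_i$ and $\overline x$ by a path $\pi \subseteq I$ with $|\pi| \le 2\rho n$. Because $u_i \in D$ and $\overline x \in \overline D$ lie on opposite sides of $H_n$, the path $\pi$ crosses $H_n$; truncating $\pi$ at its first crossing and using that every edge of $\pi$ has $\tau \le M$ by definition of the Bernoulli field $\omega$ produces a path from $u_i$ to $H_n$ of passage time at most $2\rho M n$. This yields
\[
T(u_i, H_n) \le 2\rho M n,
\]
with the obvious adaptation in the odd case where the edge of $\pi$ that crosses $H_n$ is split into two halved edges of the modified model.

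The sub-path $\widetilde\gamma = (u_i, u_{i+1}, \ldots, u_L)$ of $\gamma_0$ is a geodesic between its endpoints, and a short comparison shows that it actually lies in $\Gamma(u_i, H_n)$: indeed, any $y \in H_n$ with $T(u_i, y) < T(u_i, u_L)$ would give $T(0, y) \le T(0, u_i) + T(u_i, y) < T(0, u_L) = T(0, H_n)$, contradicting the defining property of $u_L$. Hence $\tau(\widetilde\gamma) = T(u_i, H_n) \le 2\rho M n$, and applying the contrapositive of $F_3$ (legitimate since $u_i \in D$) forces $L - i < 3\rho M \alpha^{-1} n$. For $k \ge i$ this gives
\[
\|u_k\|_\infty \le \|u_k - u_i\|_\infty + \|u_i\|_\infty < 3\rho M \alpha^{-1} n + \tfrac{n}{10},
\]
while for $k \le i$ we have $u_k \in D$, hence $\|u_k\|_\infty \le n/10$. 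In both cases $\|u_k\|_\infty \le (3\rho M \alpha^{-1}+1)n$, so taking the union over $\Gamma_0$ yields $\diam\big(\bigcup_{\gamma_0 \in \Gamma_0} \gamma_0\big) \le 2(3\rho M \alpha^{-1}+1)n$.

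The only step that requires genuine care is the upgrade of $\widetilde\gamma$ from a geodesic from $u_i$ to $u_L$ into an element of $\Gamma(u_i, H_n)$, since this is what allows $F_3$ to be combined with the bound on $T(u_i, H_n)$; the remaining steps are routine bookkeeping on the good event $F$.
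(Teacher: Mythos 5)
Your proof is correct and follows essentially the same route as the paper's: use $F_1$ to find an $I$-vertex $x$ of $\gamma_0$ inside $D$, $\overline F_1$ to find one in $\overline D$, $F_2$ to bound $T(x,H_n)$ by $2\rho Mn$, and then $F_3$ to bound the length of the sub-geodesic of $\gamma_0$ from $x$ onward. Your explicit verification that the sub-path of $\gamma_0$ starting at $u_i$ is actually a geodesic to $H_n$ (not merely a point-to-point geodesic to $u_L$) fills in a step that the paper's proof asserts without spelling out, but it does not change the argument.
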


\begin{proof}[Proof of the claim] The proof is illustrated by Figure~\ref{f: Figure 2.}.
Assume that $F$ occurs.
Let $\gamma_0 \in \Gamma_0$.
As $F_1$ occurs, we can fix a vertex $x \in D$ which belongs to $I$ and $\gamma_0$ such that the restriction of $\gamma_0$ between $0$ and $x$ remains inside $D$.
As $\overline F_1$ occurs, we can fix a vertex $\overline x \in \overline D$ which belongs to $I$ (use any deterministic path from $ne_1$ to $\partial \overline D$).
Using the occurrence of $F_2$ we get the existence of a path $\pi$ from $x$ to $\overline x$ which remains inside $I$ and such that $|\pi| \le 2\rho n$.
By definition of our percolation process, each edge $e$ of $\pi$ satisfies $\tau(e) \le M$.
Therefore $\tau(\pi) \le 2\rho M n$.
But $\pi$ starts at $x$, ends at $\overline x$ and these two points are on two different sides of $H_n$.
Therefore $\pi$ crosses $H_n$ and thus 
\[
T(x,H_n) \le \tau(\pi) \le 2\rho M n.
\]
Consider now the restriction of $\gamma_0$ from $x$ to $H_n$.
Let us call it $\gamma_x$.
This is a geodesic from $x$ to $H_n$. Therefore 
\[
\tau(\gamma_x) \le T(x,H_n) \le 2\rho M n.
\]
We have to take into account that for $n$ odd the last vertex of $\gamma_x$ does not belong to $\Z^d$.
We denote by $y$ the last vertex of $\gamma_x$ that belongs to $\Z^d$ which is the last vertex of $\gamma_x$ when $n$ is even or the second to last when $n$ is odd.
The restriction $\gamma_{x,y}$ of $\gamma_x$ from $x$ to $y$ only contains vertices in $\Z^d$ and
we have  
\[
\tau(\gamma_{x,y}) \le \tau(\gamma_x) \le 2\rho M n.
\] 
As $F_3$ occurs, we deduce $|\gamma_{x,y}| \le 3\rho M \alpha^{-1} n$.
Further, $y$ is at most at $\|\cdot\|_\infty$-distance $1/2$ from $H_n$ and thus
\[
\diam( \gamma_x )\le  3\rho M \alpha^{-1} n+\frac12.
\]
Finally, as the restriction of $\gamma_0$ between $0$ and $x$ remains inside $D$, we get 
\[
\gamma_0 \subset B_{(3\rho M \alpha^{-1}+1)n}.
\]
The claim follows. 
\end{proof}
Lemma \ref{l:controle-cardinal} follows from the two previous claims.
\end{proof}

\section{Proof of Theorem \ref{t:coex-general}}\label{s:proof:coex-general}

The proof we present is essentially contained in \cite{Hoffman2}.
In particular, Lemma 4.4 in \cite{Hoffman2} corresponds to the key Proposition \ref{p:coex} below and their proofs are very close.
We use the framework of regularized passage times developed by Cerf and Théret \cite{Cerf-Theret} which provides almost for free strong integrability properties.

\subsection{Regularized passage times}

We describe here what we need and refer to \cite{Cerf-Theret} for a more detailed description.
We take $M$ sufficiently large so that the edges with passage time $\tau(e) \le M$ percolate. We denote by $\cC^M$ the unique infinite component.
For every $x \in \Z^d$, let $x^M$ be a point chosen among the points in $\cC^M$ that are the closest to $x$ in the $\|\cdot\|_1$ norm.
If there are several such points, we choose one according to an arbitrary (but translation-invariant) deterministic rule.
For all $x, y \in \Z^d$, we set
\[
T^M(x,y) := T(x^M,y^M).
\]
One of the major advantages of these regularized times is that they are integrable.
This is an immediate consequence of Proposition 1 in \cite{Cerf-Theret}.
In this context, Cerf and Théret establish the following result.

\begin{theorem}[\cite{Cerf-Theret}] Let $M$ be such that $\P[\tau \le M]>p_c$. There exists a deterministic norm $\mu$ on $\R^d$ such that
\[
\lim_{\|x\|_1 \to \infty} \frac{T^M(0,x)}{\mu(x)} = 1 \quad \text{a.s. and in }L^1.
\]
This norm does not depend on the choice of $M$.
\end{theorem}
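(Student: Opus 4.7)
The plan is to apply Kingman's subadditive ergodic theorem to the regularized passage times $T^M$, then bootstrap to a uniform shape theorem and finally check that the limiting norm is independent of $M$. Before anything else I would verify the integrability needed to apply Kingman's theorem. Since $x^M$ lies in the infinite cluster $\cC^M$ by construction, there is a lattice path from $0^M$ to $x^M$ using only edges with passage time at most $M$; the Antal--Pisztora result (Lemma~\ref{l:AP}) bounds the chemical distance $d_{\cC^M}(0^M, x^M)$ by $\rho \|0^M - x^M\|_1$ with exponential tails. Combined with standard percolation estimates giving exponential tails on $\|x - x^M\|_1$, this yields $T^M(0,x) \le M\rho \|0^M - x^M\|_1$ with strong integrability; in particular $\E[T^M(0,x)] < \infty$ for every $x$.

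Next I would set up the subadditive structure. Fix $x \in \Z^d$ and put $X_{m,n} := T^M(mx, nx) = T((mx)^M, (nx)^M)$ for $0 \le m \le n$. The triangle inequality for $T$ gives $X_{m,n} \le X_{m,k} + X_{k,n}$, and since the deterministic rule defining $y \mapsto y^M$ is translation-invariant, the collection $(X_{m+1,n+1})$ has the same joint law as $(X_{m,n})$; ergodicity is inherited from the underlying i.i.d.\ environment. Kingman's theorem then yields, in each integer direction $x$, an a.s.\ and $L^1$ limit $\mu(x) := \lim_n T^M(0,nx)/n = \inf_n \E[T^M(0,nx)]/n$. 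Homogeneity along positive rationals extends $\mu$ to $\Q^d$, and the linear upper bound from integrability forces $\mu$ to be Lipschitz, hence to extend continuously to a subadditive, positively homogeneous function on $\R^d$.

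To confirm that $\mu$ is a norm, the point to argue is positivity for $x \neq 0$. Here I would exploit the running assumption $\P[\tau=0] < p_c$ together with the classical estimate of Proposition~5.8 in~\cite{Kesten-St-Flour} (already recalled in Lemma~\ref{l:5.8-Saint-Flour}): with overwhelming probability any self-avoiding path of length $n$ starting at the origin has passage time at least $\alpha n$ for some $\alpha > 0$. Applied to a geodesic realizing $T^M(0,nx)$, whose edge-length is at least $\|(nx)^M - 0^M\|_1 \ge n\|x\|_1 - O(1)$ (the correction controlled in $L^1$ by integrability of $\|\cdot - (\cdot)^M\|_1$), this yields $\E[T^M(0,nx)] \ge c n\|x\|_1$ for $n$ large, hence $\mu(x) \ge c\|x\|_1 > 0$. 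To upgrade pointwise convergence to the uniform shape theorem $T^M(0,x)/\mu(x) \to 1$ as $\|x\|_1 \to \infty$, I would use the standard interpolation argument: a.s.\ convergence along a countable dense set of rational directions plus the Lipschitz-type control $|T^M(0,x) - T^M(0,y)| \le T(x^M, y^M) \le M\rho\|x^M - y^M\|_1$ handles nearby directions via a finite $\varepsilon$-net on the unit sphere combined with Borel--Cantelli estimates for the lattice deviations. I expect this uniformization step, and in particular the uniform control of the regularization errors $x - x^M$, to be the main technical hurdle.

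Finally, independence of $\mu$ from $M$ follows quickly from the triangle inequality
\[
|T^{M_1}(0,x) - T^{M_2}(0,x)| \le T(0^{M_1}, 0^{M_2}) + T(x^{M_1}, x^{M_2}).
\]
By translation invariance and the integrability established in the first paragraph, both terms on the right are integrable random variables whose distribution does not depend on $x$, so dividing by $\|x\|_1$ and sending $\|x\|_1 \to \infty$ forces the limits corresponding to $M_1$ and $M_2$ to coincide.
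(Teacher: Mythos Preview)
Your outline is essentially correct and would lead to a valid proof, but it differs substantially from what the paper does. The paper treats this theorem as a black-box compilation of results from \cite{Cerf-Theret}: existence of the directional limits and the fact that $\mu$ is a norm come from their Theorems~1 and~2, independence from $M$ is their Theorem~4, and the uniform a.s.\ shape statement is their Theorem~3. The only genuinely new step the paper supplies is the upgrade from a.s.\ to $L^1$ convergence in the uniform statement, done by showing that the single random variable $S=\sup_{\|x\|_1\ge 1}T^M(0,x)/\|x\|_1$ is integrable (via the exponential tail bound of Proposition~1 in \cite{Cerf-Theret}) and then invoking dominated convergence. By contrast, you are rebuilding Cerf--Th\'eret's machinery from scratch via Kingman, Antal--Pisztora, and an $\varepsilon$-net interpolation; this is a legitimate route, and indeed reproduces what Cerf--Th\'eret themselves do, but it is much longer than what the paper intends here.

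Two points in your sketch deserve more care. First, you handle the a.s.\ uniform convergence but never return to the ``in $L^1$'' part of the conclusion; the domination argument the paper gives (integrability of the global supremum $S$) is exactly the missing piece, and your Antal--Pisztora tail bounds are strong enough to supply it, but you should say so explicitly. Second, in your independence-of-$M$ step, the claim that $T(0^{M_1},0^{M_2})$ is integrable is not covered by the integrability of $T^M(0,x)$ for \emph{fixed} $x$, since $0^{M_1}$ is random; you would need to combine the exponential tails on $\|0^{M_i}\|_1$ with Antal--Pisztora inside $\cC^{\max(M_1,M_2)}$ to close this. Neither issue is fatal, but both are precisely the places where the paper's strategy of citing \cite{Cerf-Theret} directly saves effort.
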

\begin{proof} We provide the proof as it is necessary to combine several results from \cite{Cerf-Theret}.
Note that our times $T^M$ are denoted by $\widetilde T$ in \cite{Cerf-Theret}.
Theorem 1 of \cite{Cerf-Theret} ensures the existence of a deterministic function $\mu$ (denoted $\widetilde\mu$ in \cite{Cerf-Theret}) such that for all $x\in\Z^d$, $\lim_{n\to\infty} \frac{T^M(0,nx)}{n}=\mu(x)$ a.s. and in  $L^1$.
Theorems 1 and 2 of \cite{Cerf-Theret} (see the comments following Theorem 2 of \cite{Cerf-Theret}) ensure that $\mu$ is a norm.
By Theorem 4 of \cite{Cerf-Theret}, $\mu$ does not depend on the choice of $M$.
The first part of Theorem 3 of \cite{Cerf-Theret} states (using the equivalence of norms)
\begin{equation}\label{e:CT}
\lim_{n \to \infty} \sup_{x \in \Z^d : \|x\|_1 \ge n} \left| \frac{T^M(0,x)}{\mu(x)}-1\right| = 0 \quad \text{a.s.}
\end{equation}
The random variables above are dominated by $aS+1$ for some constant $a>0$ with
\[
S = \sup_{x \in \Z^d : \|x\|_1 \ge 1} \frac{T^M(0,x)}{\|x\|_1}.
\]
To obtain $L^1$ convergence in \eqref{e:CT}, it suffices to show that $S$ is integrable.
Using Proposition 1 of \cite{Cerf-Theret} and adopting its notations, for any $u \ge 0$ we have
\begin{align*}
\P[S > C_3+u] 
& \le \sum_{x \in \Z^d : \|x\|_1 \ge 1} \P[T^M(0,x) > C_3\|x\|_1 + u \|x\|_1] \\
& \le \sum_{x \in \Z^d : \|x\|_1 \ge 1} C_1 e^{-C_2(C_3\|x\|_1 + u \|x\|_1)} \\
& \le \sum_{x \in \Z^d : \|x\|_1 \ge 1} C_1 e^{-C_2(C_3\|x\|_1 + u)}.
\end{align*}
This quantity is summable (in $u \in \N$).
Thus, $S$ is integrable and the convergence in \eqref{e:CT} also holds in $L^1$.
The theorem follows (we have shown a stronger result).
\end{proof}

\subsection{Control of time differences}

For all $x, y, z \in \Z^d$, we define
\[
B_z(x,y) = T(x,z) - T(y,z) \text{ and } B_z^M(x,y) = T^M(x,z) - T^M(y,z).
\]
We will repeatedly use without comments the triangle inequalities
\begin{equation}\label{e:B-triangle}
|B_z(x,y)| \le T(x,y) \text{ and } |B^M_z(x,y)| \le T^M(x,y).
\end{equation}
We will also use antisymmetry in $(x,y)$ and additivity:
\begin{equation}\label{e:B-basic}
B_z(x,y)=-B_z(y,x) \text{ and } B_z(w,x)+B_z(x,y) = B(w,y).
\end{equation}
Recall that $T^M(x,y)$ is integrable by Proposition 1 in \cite{Cerf-Theret}.
By \eqref{e:B-triangle}, this ensures that $B^M_z(x,y)$ is always integrable.

The main objective of this section is to show the following proposition and its corollary.
The proposition is essentially Lemma 4.4 of [Hof2].
The proof closely follows the one given by Hoffman in [Hof2].

\begin{prop} \label{p:coex} Let $\epsilon > 0$. Let $M$ be such that $\P[\tau \le M] > p_c$.
There exists $A > 0$ such that, for all $x \in \Z^d$ satisfying $\|x\|_1 \ge A$ and for all $N \ge 1$, we have
\[
\frac{1}{N} \#\Big\{ k \in \{1, \dots, N\} : \P[B^M_{kx}(0,x) \ge \mu(x)(1 - \epsilon)] \ge 1 - \epsilon \Big\} \ge 1 - \epsilon.
\]
In particular, for all $x \in \Z^d$ satisfying $\|x\|_1 \ge A$,
\begin{equation}\label{e:p:coex:infini}
\P\big[ \{k \ge 1 : B^M_{kx}(0,x) \ge \mu(x)(1 - \epsilon)\} \text{ is infinite} \,\big] \ge 1 - \epsilon.
\end{equation}
\end{prop}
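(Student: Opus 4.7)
The plan is to exploit a telescoping identity that, after taking expectations, recovers $\mu(x)$ on the average, combined with the $L^1$ convergence $T^M(0,y)/\mu(y)\to 1$ provided by the Cerf–Théret theorem.

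Set $Z_k := T^M(0,x) + T^M(x,kx) - T^M(0,kx)$. By the triangle inequality for $T^M$ (inherited from $T$), $Z_k\ge 0$ almost surely, and $Z_k = T^M(0,x) - B^M_{kx}(0,x)$. The key observation is that the expectations telescope: by translation-invariance of the environment (and of the deterministic rule defining $y^M$), $\E[T^M(x,kx)] = \E[T^M(0,(k-1)x)]$, so
\[
\sum_{k=1}^N \E[Z_k] \;=\; N\,\E[T^M(0,x)] - \E[T^M(0,Nx)].
\]
Fix a small $\epsilon_1>0$ (to be tuned later as a polynomial in $\epsilon$). Using the Cerf–Théret theorem, choose $A$ large enough so that $|\E[T^M(0,y)] - \mu(y)|\le \epsilon_1\mu(y)$ and $\E[|T^M(0,y)-\mu(y)|]\le \epsilon_1\mu(y)$ for every $y$ with $\|y\|_1\ge A$. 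Since $\mu(Nx) = N\mu(x)$, this yields $\sum_{k=1}^N\E[Z_k]\le 2\epsilon_1 N\mu(x)$ for all $\|x\|_1\ge A$ and every $N\ge 1$.

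The deterministic statement follows from two successive Markov inequalities. First, at most a fraction $2\epsilon_1/\eta$ of the indices $k\in\{1,\dots,N\}$ can satisfy $\E[Z_k]>\eta\mu(x)$. For the remaining $k$, Markov gives $\P[Z_k>(\epsilon/2)\mu(x)]\le 2\eta/\epsilon$. Independently, the $L^1$ estimate above yields $\P[T^M(0,x)<(1-\epsilon/2)\mu(x)]\le 2\epsilon_1/\epsilon$. Combining these, for a fraction at least $1-2\epsilon_1/\eta$ of the indices $k$, with probability at least $1 - 2\eta/\epsilon - 2\epsilon_1/\epsilon$ we have $B^M_{kx}(0,x) = T^M(0,x) - Z_k \ge (1-\epsilon)\mu(x)$. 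Choosing for instance $\eta=\epsilon^2/8$ and $\epsilon_1=\epsilon^3/16$ makes both error fractions at most $\epsilon$, proving the main inequality. For \eqref{e:p:coex:infini}, apply the first part with a smaller $\epsilon'>0$ chosen so that $(1-\epsilon')^2\ge 1-\epsilon$, and set $E_k=\{B^M_{kx}(0,x)\ge(1-\epsilon)\mu(x)\}$. Then $\frac1N\sum_{k=1}^N\P[E_k]\ge (1-\epsilon')^2$ for every $N$, and writing $Y_N=\frac1N\sum_{k=1}^N\1_{E_k}\in[0,1]$, reverse Fatou gives $\E[\limsup_N Y_N]\ge 1-\epsilon$, hence $\P[\limsup_N Y_N>0]\ge 1-\epsilon$; on that event infinitely many $E_k$ occur.

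The main technical obstacle I expect is the translation-invariance used in the telescoping step: one must verify that the Cerf–Théret regularization $y\mapsto y^M$ is chosen compatibly with $\Z^d$-translations, so that $T^M(x,kx)$ and $T^M(0,(k-1)x)$ have the same distribution. Once this is in place, the rest is a calibrated cascade of Markov inequalities and a reverse Fatou argument, all driven by the $L^1$ convergence provided by the Cerf–Théret theorem.
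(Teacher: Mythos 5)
Your proof is correct and follows essentially the same route as the paper's: your $Z_k = T^M(0,x)-B^M_{kx}(0,x)$ is nonnegative with telescoping expectations, and your two Markov inequalities (one over the index set, one on $Z_k$) together with the $L^1$ control of $T^M(0,x)$ are exactly what the paper packages as Lemmas~\ref{l:coex-moy1} and~\ref{l:coex-moy2}, while for the second part you use a reverse-Fatou argument where the paper observes $\P[\limsup_k G_k]\ge\limsup_k\P[G_k]$ — both valid. The translation-invariance of the regularization $y\mapsto y^M$ that you flag as the one point to verify is indeed built into the Cerf--Th\'eret construction (the paper specifies a translation-invariant deterministic tie-breaking rule), so there is no gap.
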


We will use the proposition through the corollary we now state.
Note that the corollary concerns $B$, while the proposition concerns $B^M$.

\begin{corollary} \label{c:coex} Let $\epsilon > 0$. There exists $A > 0$ such that,
for all $x \in \Z^d$ satisfying $\|x\|_1 \ge A$ and for all $N \ge 1$, we have
\[
\frac{1}{N} \#\Big\{ k \in \{1, \dots, N\} : \P[B_{kx}(0,x) \ge \mu(x)(1 - \epsilon)] \ge 1 - \epsilon \Big\} \ge 1 - \epsilon.
\]
In particular, for all $x \in \Z^d$ satisfying $\|x\|_1 \ge A$,
\begin{equation}\label{e:c:coex:infini}
\P\big[ \{k \ge 1 : B_{kx}(0,x) \ge \mu(x)(1 - \epsilon)\} \text{ is infinite} \,\big] \ge 1 - \epsilon.
\end{equation}
\end{corollary}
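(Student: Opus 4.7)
The plan is to deduce Corollary~\ref{c:coex} from Proposition~\ref{p:coex} via a quantitative comparison between the regularized passage times $T^M$ and the original passage times $T$, followed by a suitable tightening of the constants.

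The comparison I will use is $|T(a,b) - T(a^M,b^M)| \le T(a,a^M) + T(b,b^M)$, which is immediate from two applications of the triangle inequality for $T$ (once on $T(a,b)$ through $a^M$ and $b^M$, once on $T(a^M,b^M)$ through $a$ and $b$). Expanding
\[
B_z(0,x) - B^M_z(0,x) = \bigl[T(0,z) - T(0^M, z^M)\bigr] - \bigl[T(x,z) - T(x^M, z^M)\bigr]
\]
and applying this bound to each bracket yields
\[
\bigl|B_z(0,x) - B^M_z(0,x)\bigr| \le T(0,0^M) + T(x,x^M) + 2\, T(z,z^M).
\]
Since $\P[\tau \le M] > p_c$, the cluster $\cC^M$ is almost surely non-empty, so $0^M$ is almost surely well-defined and $T(0,0^M) < \infty$ almost surely. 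Hence for any $\delta > 0$ one can choose $R = R(\delta) > 0$ with $\P[T(0,0^M) > R] \le \delta$, and by the translation invariance of the rule $y \mapsto y^M$ the same bound holds for $T(y,y^M)$ for every $y \in \Z^d$.

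For the first assertion, given $\epsilon > 0$ I will apply Proposition~\ref{p:coex} with $\epsilon' = \epsilon/2$ and take $\delta = \epsilon/6$. For every $k$ in the good set produced by the proposition (which has density at least $1 - \epsilon' \ge 1 - \epsilon$), a union bound on the three events $\{T(0,0^M) > R\}$, $\{T(x,x^M) > R\}$, $\{T(kx,(kx)^M) > R\}$ combined with the comparison gives
\[
\P\bigl[B_{kx}(0,x) \ge \mu(x)(1-\epsilon') - 4R\bigr] \ge 1 - \epsilon' - 3\delta \ge 1 - \epsilon.
\]
Taking $A$ large enough that $\mu(x)(\epsilon - \epsilon') \ge 4R$ whenever $\|x\|_1 \ge A$ (possible because $\mu$ is a norm and hence bounded below by a constant multiple of $\|\cdot\|_1$), the quantity inside the probability dominates $\mu(x)(1-\epsilon)$, which is the first part of the corollary.

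For the "in particular" statement, I will apply the first part of the corollary with $\epsilon/4$ in place of $\epsilon$, so that $\E[|K \cap \{1,\dots,N\}|] \ge (1-\epsilon/4)^2 N$, where $K = \{k \ge 1 : B_{kx}(0,x) \ge \mu(x)(1-\epsilon)\}$. Setting $Z_N = |K \cap \{1,\dots,N\}|/N \in [0,1]$, the trivial bound $\E[Z_N] \le \P[Z_N \ge 1/2] + \tfrac{1}{2}(1 - \P[Z_N \ge 1/2])$ then forces $\P[Z_N \ge 1/2] \ge 2(1-\epsilon/4)^2 - 1 \ge 1 - \epsilon$ uniformly in $N$. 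Applying reverse Fatou to the indicators $\1\{Z_N \ge 1/2\}$ and using the inclusion $\{Z_N \ge 1/2 \text{ infinitely often}\} \subset \{|K| = \infty\}$ yields $\P[|K|=\infty] \ge 1 - \epsilon$. The only mild obstacle is careful bookkeeping of the small constants $\epsilon'$, $\delta$, $A$; no new conceptual ingredient beyond the comparison inequality and the a.s.\ finiteness of $T(0,0^M)$ is required.
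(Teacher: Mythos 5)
Your proof is correct, but it follows a genuinely different route from the paper's in both halves. For the first assertion, the paper increases $M$ (permitted since $\mu$ is independent of $M$) until $\P[0 \in \cC^M] \ge 1 - \epsilon$, and then works on the event $\{0^M = 0,\ x^M = x,\ (kx)^M = kx\}$, on which $B_{kx}(0,x) = B^M_{kx}(0,x)$ holds exactly; a union bound then transfers the estimate from $B^M$ to $B$ at the cost of $3\epsilon$ in probability and without moving the threshold $\mu(x)(1-\epsilon)$. You instead fix an arbitrary admissible $M$ and compare $B$ to $B^M$ via the triangle inequality $|B_{kx}(0,x) - B^M_{kx}(0,x)| \le T(0,0^M) + T(x,x^M) + 2T(kx,(kx)^M)$, then absorb the additive error $4R$ coming from tightness of $T(0,0^M)$ into the slack $\mu(x)(\epsilon - \epsilon')$ by taking $A$ large (you should also stipulate that $A$ be at least the constant produced by Proposition~\ref{p:coex}, a trivial point). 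Both routes are sound; the paper's is shorter and keeps the threshold unchanged, while yours has the mild advantage of not having to take $M$ large. For the ``in particular'' statement, the paper applies the same continuity-from-above argument already spelled out at the end of the proof of Proposition~\ref{p:coex}: since the first part gives density at least $1-\epsilon>0$ of good indices $k$, infinitely many $k$ satisfy $\P[G_k] \ge 1-\epsilon$, so $\P[\bigcup_{k \ge \ell} G_k] \ge 1-\epsilon$ for every $\ell$ and hence $\P[\limsup_k G_k] \ge 1-\epsilon$. Your detour through the empirical density $Z_N$, the inequality $\E[Z_N] \le \P[Z_N \ge 1/2] + \tfrac12 \P[Z_N < 1/2]$, and reverse Fatou is correct but considerably more involved than necessary.
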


The proof of the proposition relies on the following two lemmas.

\begin{lemma} \label{l:coex-moy1} Let $M > 0$ be such that $\P[\tau \le M] > p_c$. Let $\epsilon > 0$. There exists $A > 0$ such that, for all $x \in \Z^d$ satisfying $\|x\|_1 \ge A$ and for all $N \ge 1$, we have
\[
\frac{1}{N} \#\Big\{ k \in \{1, \dots, N\} : \E[B^M_{kx}(0,x)] \ge \mu(x)(1 - \epsilon) \Big\} \ge 1 - \epsilon.
\]
\end{lemma}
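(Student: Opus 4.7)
The plan is to telescope $\sum_{k=1}^N\E[B^M_{kx}(0,x)]$ into $\E[T^M(0,Nx)]$ and then sandwich both sides using the Cerf-Th\'eret control $\E[T^M(0,y)]/\mu(y)\to 1$.

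By translation invariance of the environment and of the deterministic tiebreaker in the definition of $y^M$, the random variable $T^M(x,kx)=T(x^M,(kx)^M)$ has the same distribution as $T^M(0,(k-1)x)$, hence $\E[T^M(x,kx)]=\E[T^M(0,(k-1)x)]$. Summing over $k \in \{1,\dots,N\}$ and using $T^M(0,0)=0$ yields the telescoping identity
\[
\sum_{k=1}^N\E[B^M_{kx}(0,x)] \;=\; \sum_{k=1}^N\bigl(\E[T^M(0,kx)]-\E[T^M(0,(k-1)x)]\bigr) \;=\; \E[T^M(0,Nx)].
\]
Next, fix the auxiliary parameter $\epsilon'=\epsilon^2/2$. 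The $L^1$ convergence $T^M(0,y)/\mu(y)\to 1$ combined with the equivalence of $\mu$ and $\|\cdot\|_1$ provides $A>0$ such that $(1-\epsilon')\mu(y)\le\E[T^M(0,y)]\le(1+\epsilon')\mu(y)$ whenever $\|y\|_1\ge A$.

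Now assume $\|x\|_1\ge A$, which ensures $\|kx\|_1\ge A$ for every $k\ge 1$. The lower bound applied at $y=Nx$, together with $\mu(Nx)=N\mu(x)$, gives $\sum_{k=1}^N\E[B^M_{kx}(0,x)]\ge(1-\epsilon')N\mu(x)$. The triangle inequality $|B^M_{kx}(0,x)|\le T^M(0,x)$ combined with the upper bound at $y=x$ gives the pointwise estimate $\E[B^M_{kx}(0,x)]\le(1+\epsilon')\mu(x)$ valid for every $k$. Writing $b_N$ for the number of bad indices $k\in\{1,\dots,N\}$ (those with $\E[B^M_{kx}(0,x)]<(1-\epsilon)\mu(x)$) and splitting the sum according to this dichotomy, we obtain
\[
(1-\epsilon')N\mu(x) \;\le\; (N-b_N)(1+\epsilon')\mu(x) + b_N(1-\epsilon)\mu(x),
\]
which rearranges to $b_N(\epsilon+\epsilon')\le 2\epsilon'N$, hence $b_N/N\le 2\epsilon'/\epsilon=\epsilon$. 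This is exactly the assertion. The only delicate point is the distributional identity $T^M(x,kx)\sim T^M(0,(k-1)x)$ used at the start, which hinges on the translation invariance of the rule defining $y^M$ built into the Cerf-Th\'eret framework; once this is granted, everything reduces to a straightforward first-moment counting argument.
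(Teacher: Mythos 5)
Your proof is correct and takes essentially the same route as the paper: both telescope $\sum_{k=1}^N\E[B^M_{kx}(0,x)]$ into $\E[T^M(0,Nx)]$ using stationarity, apply the Cerf--Th\'eret $L^1$ control $\E[T^M(0,y)]/\mu(y)\to 1$, and finish with a first-moment counting argument. The paper runs the Markov-inequality version directly with the working $\epsilon$, obtaining a conclusion with constants $\sqrt{2\epsilon}$ in both slots and tacitly relabeling at the end, whereas you preemptively set $\epsilon'=\epsilon^2/2$ and do a good/bad split so the final constants land on $\epsilon$ exactly; these are the same argument up to bookkeeping. You also spell out the translation-invariance justification for $\E[T^M(x,kx)]=\E[T^M(0,(k-1)x)]$, which the paper uses silently.
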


\begin{proof}
Let $\epsilon > 0$. Fix $A > 0$ such that, for all $x$ satisfying $\|x\|_1 \ge A$, we have 
\[
\mu(x)(1 - \epsilon) \le \E[T^M(0,x)] \le \mu(x)(1 + \epsilon).
\]
Let $x$ satisfy $\|x\|_1 \ge A$.
For all $k \ge 1$, we have by \eqref{e:B-triangle},
\[
\E[B^M_{kx}(0,x)] \le \E[T^M(0,x)] \le \mu(x)(1 + \epsilon)
\]
and thus 
\begin{equation}\label{e:pos1}
\mu(x)(1 + \epsilon) - \E[B^M_{kx}(0,x)] \ge 0.
\end{equation}
Moreover, for all $N \ge 1$, we have 
\[
\frac{1}{N} \sum_{k=1}^{N} \E[B^M_{kx}(0,x)] = \frac{1}{N} \E\left[ T^M(0,Nx) \right] \ge \frac{1}{N} \mu(N x)(1 - \epsilon) = \mu(x)(1 - \epsilon)
\]
and thus
\begin{equation} \label{e:moy1}
\frac{1}{N} \sum_{k=1}^{N} \Big( \mu(x)(1 + \epsilon) - \E[B^M_{kx}(0,x)] \Big) \le 2 \mu(x) \epsilon.
\end{equation}
From \eqref{e:pos1} and \eqref{e:moy1}, we deduce
\[
\frac{1}{N} \#\Big\{ k \in \{1, \dots, N\} : \mu(x)(1 + \epsilon) - \E[B^M_{kx}(0,x)] \ge \mu(x) \sqrt{2\epsilon} \Big\} \le \sqrt{2\epsilon}
\]
and thus
\[
\frac{1}{N} \#\Big\{ k \in \{1, \dots, N\} : \E[B^M_{kx}(0,x)] \ge \mu(x)(1 - \sqrt{2\epsilon}) \Big\} \ge 1 - \sqrt{2\epsilon}.
\]
\end{proof}

\begin{lemma}  \label{l:coex-moy2} Let $M > 0$ be such that $\P[\tau \le M] > p_c$. 
Let $\epsilon > 0$. There exists $A > 0$ such that, for all $x \in \Z^d$ satisfying $\|x\|_1 \ge A$, we have, for all $k \ge 1$,
\[
\E[B^M_{kx}(0,x)] \ge \mu(x)(1 - \epsilon) \implies \P[B^M_{kx}(0,x) \ge \mu(x)(1 - 2\sqrt{\epsilon})] \ge 1 - 2\sqrt{\epsilon}.
\]
\end{lemma}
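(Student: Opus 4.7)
The plan is to convert the expectation lower bound on $B^M_{kx}(0,x)$ into a probabilistic lower bound by exhibiting a non-negative random variable that dominates $\mu(x) - B^M_{kx}(0,x)$ and whose expectation is $O(\epsilon \mu(x))$, then apply Markov's inequality. The key is that $B^M_{kx}(0,x)$ does not concentrate on its own, but it is sandwiched between quantities that do.

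The basic inequality I would use is $B^M_{kx}(0,x) \le T^M(0,x)$, which follows from the triangle inequality $T^M(0,kx) \le T^M(0,x) + T^M(x,kx)$ (valid because $T^M(a,b) = T(a^M,b^M)$ inherits the triangle inequality from $T$). I would then decompose
\[
\mu(x) - B^M_{kx}(0,x) = \bigl(\mu(x) - T^M(0,x)\bigr) + \bigl(T^M(0,x) - B^M_{kx}(0,x)\bigr),
\]
where the second summand is non-negative, and bound this above by the non-negative random variable
\[
W := \bigl(\mu(x) - T^M(0,x)\bigr)^+ + \bigl(T^M(0,x) - B^M_{kx}(0,x)\bigr).
\]
By the Cerf--Théret theorem (both the expectation convergence $\E[T^M(0,x)]/\mu(x) \to 1$ and the $L^1$ convergence $\E|T^M(0,x)/\mu(x) - 1| \to 0$), for all $\|x\|_1$ sufficiently large I get $\E[T^M(0,x)] \le \mu(x)(1+\epsilon)$ and $\E\bigl(\mu(x) - T^M(0,x)\bigr)^+ \le \epsilon \mu(x)$. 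Combined with the hypothesis $\E[B^M_{kx}(0,x)] \ge \mu(x)(1-\epsilon)$, this yields $\E[W] \le \epsilon \mu(x) + 2\epsilon\mu(x) = 3\epsilon\mu(x)$.

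Markov's inequality then gives
\[
\P\bigl[B^M_{kx}(0,x) \le \mu(x)(1 - 2\sqrt{\epsilon})\bigr] \le \P\bigl[W \ge 2\sqrt{\epsilon}\,\mu(x)\bigr] \le \frac{3\epsilon \mu(x)}{2\sqrt{\epsilon}\,\mu(x)} = \tfrac{3}{2}\sqrt{\epsilon} \le 2\sqrt{\epsilon},
\]
which is the desired bound. Note that the threshold $A$ depends only on $\epsilon$ (through the quantitative rates in the $L^1$ convergence), not on $k$, so the implication holds uniformly in $k \ge 1$. The only real subtlety is step two, the choice of the dominating variable $W$: a naive attempt to apply Markov to $\mu(x)(1+\epsilon) - B^M_{kx}(0,x)$ fails because this quantity is not almost surely non-negative, which is precisely why the $(\mu(x) - T^M(0,x))^+$ correction must be added. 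Everything else is a routine combination of the triangle inequality and two applications of Markov's inequality.
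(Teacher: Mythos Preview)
Your proof is correct and follows essentially the same approach as the paper: both use the triangle inequality $B^M_{kx}(0,x)\le T^M(0,x)$ together with the $L^1$ convergence $T^M(0,x)/\mu(x)\to 1$ to manufacture a non-negative random variable with expectation $O(\epsilon\mu(x))$ that dominates $\mu(x)-B^M_{kx}(0,x)$, and then apply Markov. The only cosmetic difference is in how the non-negativity is enforced: the paper multiplies $B^M_{kx}(0,x)$ by the indicator of the good event $G=\{T^M(0,x)\le\mu(x)(1+\epsilon)\}$ and works with $\mu(x)(1+\epsilon)-B^M_{kx}(0,x)\1_G$, whereas you add the positive part $(\mu(x)-T^M(0,x))^+$ instead; your version is arguably a bit cleaner and yields the slightly better constant $\tfrac32\sqrt{\epsilon}$ in place of $2\sqrt{\epsilon}$.
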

\begin{proof} 
Fix $A > 0$ such that, for all $x$ satisfying $\|x\|_1 \ge A$, we have 
\[
\P[T^M(0,x) \ge \mu(x)(1 + \epsilon)] \le \epsilon
\]
and
\[
\E[|T^M(0,x) - \mu(x)|] \le \epsilon \mu(x).
\]
Let $x$ satisfy $\|x\|_1 \ge A$.
Let $k \ge 1$ such that
\[
\E[B^M_{kx}(0,x)] \ge \mu(x)(1 - \epsilon).
\]
Consider the good event 
\[
G = \{T^M(0,x) \le \mu(x)(1 + \epsilon) \}.
\]
We have 
\begin{align*}
\E[B^M_{kx}(0,x)\1_G] 
& = \E[B^M_{kx}(0,x)] - \E[B^M_{kx}(0,x)\1_{G^c}] \\
& \ge \mu(x)(1 - \epsilon) - \E[T^M(0,x)\1_{G^c}] \\
& \ge \mu(x)(1 - \epsilon) - \E[|T^M(0,x) - \mu(x)|] - \mu(x)\P[G^c] \\
& \ge \mu(x)(1 - 3\epsilon)
\end{align*}
and thus
\begin{equation}\label{e:moy2}
\E[\mu(x)(1 + \epsilon) - B^M_{kx}(0,x)\1_G] \le 4\epsilon \mu(x).
\end{equation}
But 
\begin{equation}\label{e:pos2}
\mu(x)(1 + \epsilon) - B^M_{kx}(0,x)\1_G \ge 0.
\end{equation}
From \eqref{e:moy2} and \eqref{e:pos2}, we deduce
\[
\P[\mu(x)(1 + \epsilon) - B^M_{kx}(0,x)\1_G \ge 2\sqrt{\epsilon} \mu(x)] \le 2\sqrt{\epsilon}.
\]
Hence,
\[
\P[B^M_{kx}(0,x)\1_G \ge \mu(x)(1 - 2\sqrt{\epsilon})] \ge 1 - 2\sqrt{\epsilon}
\]
then
\[
\P[B^M_{kx}(0,x) \ge \mu(x)(1 - 2\sqrt{\epsilon})] \ge 1 - 2\sqrt{\epsilon}.
\]
\end{proof}

\begin{proof}[Proof of Proposition \ref{p:coex}]
The first part follows from Lemmas \ref{l:coex-moy1} and \ref{l:coex-moy2} (applied with $\epsilon^2 / 4$).
The second part follows from the first.
Indeed, let $G_k$ be the event $\{B^M_{kx}(0,x) \ge \mu(x)(1 - \epsilon)\}$. We have $\P[\limsup_k G_k] = \lim_\ell \P[\cup_{k \ge \ell} G_k] \ge 1 - \epsilon$
because $\P[G_k] \ge 1 - \epsilon$ for infinitely many $k$.
\end{proof}

\begin{proof}[Proof of Corollary \ref{c:coex}] Let $\epsilon > 0$. Fix $M > 0$ such that $\P[\tau \le M] > p_c$ and $\P[0 \in \cC^M] \ge 1 - \epsilon$.
If $k$ and $x$ are such that $\P[B^M_{kx}(0,x) \ge \mu(x)(1 - \epsilon)] \ge 1 - \epsilon$, then we have
\begin{align*}
\P[B_{kx}(0,x) \ge \mu(x)(1 - \epsilon)] 
& \ge \P[0^M = 0 \text{ and } x^M = x \text{ and } (kx)^M = kx \text{ and } B^M_{kx}(0,x) \ge \mu(x)(1 - \epsilon)] \\
& \ge \P[B^M_{kx}(0,x) \ge \mu(x)(1 - \epsilon)] - 3\epsilon \\
& \ge 1 - 4\epsilon.
\end{align*}
The first part of the corollary follows from the first part of Proposition \ref{p:coex}.
The second part of the corollary follows from the second.
\end{proof}

\subsection{Proof of Theorem \ref{t:coex-general}}

\begin{proof}[Proof of Theorem \ref{t:coex-general}]

Let $A$ be the constant given by Corollary \ref{c:coex} for $\epsilon=1/4$. 
Given that we are increasing $A$, we may assume $A$ is an integer.
Set
\[
\overline L^+ = \limsup_{k \to \infty} B_{ke_1}(0,Ae_1) \text{ and } \underline L^- = \liminf_{k \to -\infty} B_{ke_1}(0,Ae_1).
\]
These quantities are finite thanks to \eqref{e:B-triangle}.
Consider the event
\[
G = \left\{ \overline L^+ - \underline L^- > 0\right\}.
\]
Applying \eqref{e:c:coex:infini} to $x=Ae_1$ we get the first part of the next display.
Applying \eqref{e:c:coex:infini} to $x=-Ae_1$, using antisymmetry of $(x,y) \to B_z(x,y)$ and stationarity of the model, we get the second part of the display.
To sum up:
\[
\P\left[\overline L^+ \ge \frac 3 4 \mu(Ae_1)\right] \ge \frac 3 4 \text{ and } \P\left[\underline L^- \le - \frac 3 4 \mu(Ae_1)\right] \ge \frac 3 4.
\]
As $\mu$ is a norm we get $\mu(Ae_1)>0$ and thus
\[
\P[G] \ge \P\left[\overline L^+ \ge \frac 3 4 \mu(Ae_1) \text{ and } \underline L^- \le - \frac 3 4 \mu(Ae_1) \right] \ge \frac 1 2.
\]
For all $a \in \{0,\dots,A-1\}$ we set
\[
\overline\ell^+(a) = \limsup_{k \to \infty} B_{ke_1}(ae_1,(a+1)e_1) \text{ and } \underline\ell^-(a) = \liminf_{k \to -\infty} B_{ke_1}(ae_1,(a+1)e_1) 
\]
and define
\[
G(a) = \{\underline \ell^-(a) < \overline \ell^+(a)\}.
\]
Stationarity ensures that  $\P[G(a)]$ does not depend on $a$.
On $G$ we have:
\[
\max_{0 \le a \le A-1} \big( \overline\ell^+(a) - \underline\ell^-(a) \big)
  \ge \frac 1 A \sum_{a=0}^{A-1} \big( \overline\ell^+(a) - \underline\ell^-(a) \big) 
  \ge \frac 1 A \big( \overline L^+ - \underline L^- \big) 
  > 0
\]
We uses \eqref{e:B-basic} for the second inequality and the assumption "$G$ occurs" for the third one.
We then have
\[
G \subset \bigcup_{a=0}^{A-1} G(a).
\]
As $\P[G(a)]$ does not depend on $a$ we deduce, by union bound, the inequality
\[
\P[G(0)] \ge \frac 1 A \P[G] > 0.
\]
This ends the proof.
\end{proof}

\bibliographystyle{plain}

\end{document}